\newtheorem{theorem}{Theorem}[section]
\newtheorem{lemma}[theorem]{Lemma}
\newtheorem{prop}[theorem]{Proposition}
\theoremstyle{definition}
\newtheorem{defn}[theorem]{Definition}
\DeclareMathOperator{\Bern}{Bern}
\newcommand{\E}{\mathbb E}
\newcommand{\PP}{\mathbb P}
\newcommand{\Z}{\mathbb Z}
\newcommand{\R}{\mathbb R}
\newcommand{\bS}{\overline S}
\newcommand{\bC}{\overline C}
\newcommand{\bv}{\mathbf v}
\newcommand{\boo}{\mathbf 0}
\newcommand{\cB}{\mathcal B}
\newcommand{\cC}{\mathcal C}
\newcommand{\cF}{\mathcal F}
\newcommand{\cK}{\mathcal K}
\newcommand{\cW}{\mathcal W}
\newcommand{\cM}{\mathcal M}
\newcommand{\cS}{\mathcal S}
\newcommand{\cT}{\mathcal T}
\newcommand{\cP}{\mathcal P}
\newcommand{\cQ}{\mathcal Q}
\newcommand{\cR}{\mathcal R}
\newcommand{\cI}{\mathcal I}
\newcommand{\cJ}{\mathcal J}
\newcommand{\hG}{\tilde G}
\newcommand{\hP}{\tilde P}
\newcommand{\hJ}{\tilde J}
\newcommand{\Wv}{W^{\vee}}
\newcommand{\fW}{\mathfrak W}
\newcommand{\sP}{\mathscr P}
\renewcommand\tableofcontents{
  \null\hfill\textbf{\Large\contentsname}\hfill\null\par
  \@mkboth{\MakeUppercase\contentsname}{\MakeUppercase\contentsname}
  \@starttoc{toc}
}
\g@addto@macro\normalsize{
  \setlength\abovedisplayskip{5pt}
  \setlength\belowdisplayskip{5pt}
  \setlength\abovedisplayshortskip{3pt}
  \setlength\belowdisplayshortskip{3pt}
}
\numberwithin{equation}{section}
\begin{document}
\definecolor{ttqqqq}{rgb}{0.2,0.,0.}

\title{Stationary Distributions for the Voter Model in $d\geq 3$ are Factors of IID}

\author{Allan Sly
\thanks{Department of Mathematics, Princeton University, Princeton, NJ, USA. e-mail: allansly@princeton.edu}
\and Lingfu Zhang
\thanks{Department of Mathematics, Princeton University, Princeton, NJ, USA. e-mail: lingfuz@math.princeton.edu}
}
\date{}

\maketitle

\begin{abstract}
For the Voter Model on $\Z^d$, $d\geq 3$,
we show that the (extremal) stationary distributions are isomorphic to Bernoulli shifts, and answer an open question asked by Steif and Tykesson in \cite{steif2017generalized}.
The proof gives explicit constructions of the stationary distributions as factors of IID processes on $\Z^d$.
\end{abstract}

\section{Introduction}
We study the stationary distributions of the \emph{Voter Model} in $\Z^d$, for $d \geq 3$.
The model is one of the classic interacting particle systems taking values in $\{0,1\}^{\Z^d}$ and can be defined as follows.
On each vertex $x \in \Z^d$, there is a voter with an opinion $\eta(x) \in \{0, 1\}$.
Each voter at rate $1$ chooses a neighbor at random (among its $2d$ neighbors, with equal probability), and changes its opinion to the same as that neighbor.
This model can also be seen as a continuous time Markov process with state space $\{0, 1\}^{\Z^d}$.
For each measure $\mu$ on $\{0, 1\}^{\Z^d}$, and $t \in \R_+$, define $\cM_t\mu$ as the measure of running the Markov process for time $t$ with initial measure $\mu$.

For this Markov process, all the extremal stationary distributions can be described as follows (see e.g. \cite{liggett2004interacting}).
For each $0 \leq p \leq 1$, let $\rho_p$ be the measure on $\{0, 1\}^{\Z^d}$, where each voter has opinion $\Bern(p)$ independently.
Let $\mu_p$ be the weak limit of $\cM_t\rho_p$ as $t \rightarrow \infty$, then $\mu_p$ is a stationary distribution of the Voter Model.
For $d \leq 2$, where simple random walk is recurrent, only the constant measures $\mu_0, \mu_1$ are extremal and they are the only extremal stationary distributions.
For $d \geq 3$, the transient case, $\mu_p$ is extremal for each $0 \leq p \leq 1$, and $\{\mu_p\}_{p\in [0, 1]}$ are precisely all the extremal stationary distributions.

We consider the ergodic properties of the family $\{\mu_p\}_{p\in [0, 1]}$ for $d \geq 3$.
It is known that they are translation invariant and spatially ergodic.
A stronger ergodic property is the so called \emph{Bernoullicity}, defined as follows.
\begin{defn}  \label{defn:ber}
Let $X$ be any finite set equipped with a probability measure.
The product space $X^{\Z^d}$, together with
the product measure and an action of $\Z^d$ given by translations, is called a \emph{Bernoulli shift}.
It is called a \emph{generalized Bernoulli shift} if $X$ is replaced by a general probability space (i.e. not necessarily finite).
\end{defn}

The question of whether $\{\mu_p\}_{p\in [0, 1]}$ for $d \geq 3$ are isomorphic to Bernoulli shifts was posed by Steif and Tykesson in~\cite[Question 7.21]{steif2017generalized}. We give an affirmative answer to this question.
\begin{theorem}  \label{thm:main}
When $d \geq 3$, $\mu_p$ is isomorphic to a Bernoulli shift for each $0 \leq p \leq 1$.
\end{theorem}

Steif and Tykesson were more generally interested in the question of what they called \emph{Generalized Divide and Color models}.  In such models the vertices of a graph are partitioned into subsets by a \emph{random equivalence relation (RER)} and then each equivalence class of vertices is given a random color independently.
Examples of this include the Ising and Potts models via the Random Cluster Model (RCM) and, as we will see, so is the Voter Model through its dual formulation.  When the partition is isomorphic to a Bernoulli shift and its elements are finite almost surely then it is easy to see that the resulting coloring is also isomorphic to a Bernoulli shift.  Steif and Tykesson asked whether there were natural examples where the equivalence classes of the partition are infinite but that the coloring process is nonetheless isomorphic to a Bernoulli shift.  The Voter Model provides such an example answering Question 7.23 of~\cite{steif2017generalized}.

To establish that $\mu_p$ is isomorphic to a (generalized) Bernoulli shift on $\Z^d$, by~\cite{ornstein1970factors} it is sufficient to show that it is a factor of IID. Since the measure theoretical entropy of $\mu_p$ is clearly finite, by the Ornstein Isomorphism Theorem~ \cite{ornstein1970bernoulli} we get Theorem~\ref{thm:main}.
Note that while the results in \cite{ornstein1970factors,ornstein1970bernoulli} are for $\Z$ actions, they are generalized to amenable groups (see e.g. \cite{ornstein1987entropy}).

It is not hard to see that for each $t$, $\cM_t\rho_p$ is a factor of an IID process.
Our approach will then be to find a sequence of times $t_1, t_2, \ldots$, and couple all of $\cM_{t_1}\rho_p, \cM_{t_2}\rho_p, \ldots$ together, such that the resulting coupling is also a factor of IID.
The coupling will be defined so that almost surely the configuration converges in $\{0,1\}^{\Z^d}$ (in the product topology).

To do so, we consider the dual process of the Voter Model, and interpret $\cM_t\rho_p$ as the color process of the random equivalent relations given by coalescing simple random walks, see e.g. \cite[Section 14.3]{aldous1995reversible} and \cite[Section 1.3.4]{steif2017generalized}.
The coalescing simple random walks can be described as following: at each vertex in $\Z^d$ we start a continuous time random walker with jump rate $1$, and any two walkers coalesce when they meet at the same vertex.
For each $t \in \R_+$, we define a random equivalent relation (RER) as following: for any $x, y \in \Z^d$, they are in the same class if the walkers starting from $x$ and $y$ coalesce before time $t$.
For every equivalent class (which we also denote as a cluster) we take a random $\Bern(p)$ variable independently, and we let $\eta_t(x)$ be the same as that random variable for each $x$ in the cluster.
Then $\{\eta_t(x) \}_{x\in \Z^d}$ is the color process of this RER, and is distributed as $\cM_t\rho_p$.

For each $t\in\R_+$, the RER is defined in the probability space of the coalescing simple random walks, so a naive way of coupling $\cM_t\rho_p$ for different $t$ is to simply color each cluster of each time independently.
However, in this way, for fixed $x \in \Z^d$ $\eta_t(x)$ is IID for all $t$, so $\cM_t\rho_p$ does not strongly converge as $t \rightarrow \infty$.

Our approach is to couple $\cM_t\rho_p$ inductively.
Specifically, we take a sequence of times $t_k = 2^k$ for integers $k\geq 0$.
For each $k\geq 0$, given the coalescing simple random walks up to time $2^k$, and a coloring of the remaining walkers, we construct random walks from these walkers to time $2^{k+1}$.
The constructed walks are not independent simple random walks any more, but favoring the event that walkers of the same color coalesce.  Nonetheless, when we average over the whole process it will still have the correct distribution.

In particular, the construction shall satisfy the following requirements.
First, it is a factor of IID.
Second, if the coloring of the remaining walkers at time $2^k$ are IID $\Bern(p)$, then the marginal distribution of the constructed walks is the same as that of coalescing simple random walks up to time $2^{k+1}$.  Finally, conditional on the coalescing walks up to time $2^{k+1}$, we recolor a small portion of the remaining walkers, in a sense we will make quantitative, so that the coloring of the remaining walkers are made IID $\Bern(p)$ again.
This construction produces a coupling between the colorings of the walkers at each $2^{k}$ such that almost surely each walker changes its color only finitely many times.  We thus get almost sure convergence of the coupled color processes, or $\cM_{2^k}\rho_p$, as $k \rightarrow \infty$ and the limit process is a factor of IID.

In the literature there has been interest in Bernoullicity (or proving factor of IID) for other models as well; see e.g. \cite{van1999existence, holroyd2016finitely, lyons2017factors, spinka2020finitary, spinka2020finitely, ray2020proper}.
Another natural class of divide and color models are the Ising and Potts models.  The RCM representation gives a partition of $\Z^d$ and the law of the Ising and Potts models is then constructed by assigning each component of the RCM an independent state.  Since any infinite component of the RCM will be unique almost surely on $\Z^d$, the resulting measure will be a factor of IID if and only if the components are all finite almost surely.  On the tree, however, there may be many infinite components of the RCM.  It remains an open problem of Lyons~\cite{lyons2017factors} to determine when the free Ising model on the infinite d-regular tree is a factor of IID.  It is known that it is a factor of IID at high temperatures when the model has uniqueness and is not a factor of IID at low temperatures when the measure is non-extremal.  However, an intermediate regime where the measure is non-unique but extremal remains open.  We conjecture that it is a factor of IID which would give an alternative example for Question 7.23~\cite{steif2017generalized}.

\section*{Acknowledgments}
The authors would like to thank Jeff Steif for suggesting the problem.  
We are also very thankful for the many helpful suggestions from an anonymous referee, who pointed out an error in the first version of this paper, and suggested a simplification in the construction of $\fW_0$ and $\fW_1$ in Section \ref{ssec:biased}.  This work was supported by an NSF career award (DMS-1352013), a Simons Investigator grant and a MacArthur Fellowship.

\section{Coupling of two times}  \label{sec:StepConst}
From now on we fix $d \geq 3$ and $p \in [0, 1]$.

In this section we describe the construction of the coupling at each step, between two times $t_0$ and $t_0 + t$.
We do it with the following approach:
given some walkers at time $t_0$, and a $0-1$ coloring of these walkers, we construct coalescing simple random walks for these walkers from $t_0$ to $t_0 + t$,
and we define a new coloring for the remaining walkers at time $t_0+t$.

We construct these coalescing simple random walks and the new coloring from the following IID process.
For each vertex $x\in\Z^d$ we take a sequence of simple random walks $\{W_{x,m}\}_{m=1}^{\infty}$, each for time $[0, t]$; a sequence of uniform $[0, 1]$ random variables $\{u_{x,m}\}_{m=1}^{\infty}$; and an additional uniform $[0,1]$ random variable $v_x$.
Here and below, by simple random walks we mean one starts at the origin $\boo$, unless otherwise noted.

Formally, we let $\cW_t$ be the subspace of all left continuous functions from $[0, t]$ to $\Z^d$, which take only finitely many values (we refer to such functions as \emph{paths}).
For a (finite or infinite) collection of paths, we say they are \emph{coalesce-able}, if each vertex in $\Z^d$ is visited by at most finitely many of these paths.
By \emph{coalescing paths}, we mean some coalesce-able paths such that if two of them are at the same vertex at a time, they remain the same at all later times.

Let's explain how coalescing paths will be constructed. From any collection of coalesce-able paths, we first index them by natural numbers, then construct the coalescing paths by joining them one at a time.
When the paths are simple random walks, such construction gives coalescing random walks.

We now set up some notations for this construction.
\begin{defn}
Let $\sP$ be a (possibly empty) collection of coalescing paths, and $P$ be another path, and all of them are in $\cW_t$.
We denote $J[P, \sP] \in \cW_t$ as the path of joining $P$ into $\sP$, 
\[
J[P, \sP] (t') :=
\begin{cases}
P(t'), \quad 0 \le t' \leq t_h, \\
\overline{P}(t'), \quad  t_h < t' \le t.
\end{cases}
\]
where $t_h := \inf\{t' \in [0, t]: P(t') \in \{P'(t')\}_{P'\in \sP}\}\cup \{t\}$, and when $t_h<t$, $\overline{P}$ is any path in $\sP$ satisfying  $\lim_{t'\downarrow t_h}\overline{P}(t') = \lim_{t'\downarrow t_h}P(t')$.
In words, $J[P, \sP]$ is the same as $P$ until $P$ hits a path in $\sP$ for the first time, and then it follows that path after the hitting.

A set $I$ is called an \emph{index set} if it is either $\emptyset$, or $\{1,\ldots, k\}$ for some $k \in \Z_+$, or $\Z_+$.
For a coalesce-able collection of paths $\{P_{i}\}_{i\in I} \in \cW_t^{I}$, 
we inductively define $J[\{P_{i}\}_{i\in I}, \sP]: I \times [0, t]\rightarrow \Z^d$, read as \emph{paths $\{P_i\}_{i\in I}$ coalesced into the paths $\sP$}, by joining the paths in $\{P_{i}\}_{i\in I}$ one by one; i.e.
we first let
\[
J[\{P_{i}\}_{i\in I}, \sP](1, \cdot):=
J[P_{1}, \sP],
\]
and for each $i \in I, i>1$, we let
\[
J[\{P_{i}\}_{i\in I}, \sP](i, \cdot):=
J[\{P_{i}\}_{i\in I}, \sP \cup \{ J[\{P_{i}\}_{i\in I}, \sP](i', \cdot) \}_{i'\in I, i'<i} ].
\]
We will write $J[\{P_{i}\}_{i\in I}]$ in the case that $\sP=\emptyset$ and we are only coalescing $\{P_{i}\}_{i\in I}$.
Note that $J[\{P_{i}\}_{i\in I}, \sP]\cup \sP$ and $J[\{P_{i}\}_{i\in I}]$ are now coalescing paths.
\end{defn}
Take an index set $I$, and take vertices $x_i\in\Z^d$ and independent simple random walks $W_i$ for each $i\in I$.
Then $J[\{W_{i}+x_i\}_{i\in I}]$ is distributed as coalescing simple random walks, and the law does not depend on the order of joining the paths (although the realization itself does depend on the order).
In fact, the recursive joining operation corresponds to revealing paths (in coalescing simple random walks) one after another.
For any given coalescing paths $\sP$,
the paths $J[\{W_{i}+x_i\}_{i\in I}, \sP]$ is distributed as coalescing simple random walks conditioned on a set of existing paths of some walkers.

\subsection{Construction by groups}
So far the construction of coalescing paths relies on an ordering of the paths.
When we work with simple random walks with infinitely many different starting vertices, there is no translation invariant way to index them by natural numbers. 
In this case we need the starting vertices to be \emph{sparse}, so that we can split the walks into finite sets. For each of them we construct a local ordering, and construct coalescing walks separately.
\begin{defn}
For any $t\in \R_+$, $S \subset \Z^d$, and paths $\{P_{x}\}_{x \in S}\in \cW_t^{S}$ indexed by $S$,
consider a graph $\mathcal{G}$ (called \emph{the graph of $\{P_{x}\}_{x \in S}$}) as follows.
The vertex set of $\mathcal{G}$ is $S$, and for any $x_1, x_2\in S$, there is an edge between them if $P_{x_1}$ and $P_{x_2}$ ever meet.
Note that when we construct coalescing paths from $\{P_{x}\}_{x \in S}$, the ordering only matters within connected components of $\mathcal{G}$.
The paths $\{P_{x}\}_{x \in S}$ are said to be a \emph{non-percolate family for $S$}, if $\mathcal{G}$ contains no infinite connected component.

When $S$ is infinite, it is said to be \emph{$t$-sparse}, if almost surely $\{W_{x}+x\}_{x \in S}$ is a non-percolate family for $S$, 
Here $W_x$ is an independent simple random walk of time $t$ for each $x\in S$.
\end{defn}
We show that low density sets are sparse.
\begin{prop}  \label{prop:NonPer}
Let $S \subset \Z^d$ be a random set from site percolation where each vertex is in $S$ independently with probability $\overline{p}$.
Then for any $t\in \R_+$, there exists $\delta(t)>0$ such that if $\overline{p}< \delta(t)$ then $S$ is $t$-sparse almost surely.
\end{prop}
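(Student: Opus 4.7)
The plan is to dominate the meeting graph on $S$ by a simpler translation-invariant random graph $\tilde G$ on $\Z^d$ whose edges depend only on i.i.d.\ per-vertex data, and then to show that $\tilde G[S]$ is subcritical for $p$ small.

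For each $x\in\Z^d$, let $N_t(x)$ denote the number of jumps of the walk $W_x\sim\fW_t$ during $[0,t]$; then $\{N_t(x)\}_{x\in\Z^d}$ are i.i.d.\ Poisson$(t)$, and since each jump is a nearest-neighbor step, the range $x+W_x([0,t])$ is contained in the closed $\ell^1$-ball $\bar B_1(x,N_t(x))$. If the walks $W_x+x$ and $W_y+y$ meet at some $(z,s)\in\Z^d\times[0,t]$, then $z\in\bar B_1(x,N_t(x))\cap\bar B_1(y,N_t(y))$, which by the triangle inequality forces $|x-y|_1\le N_t(x)+N_t(y)$. Hence the meeting graph $(S,E(S,\{W_z+z\}_{z\in S}))$ is a subgraph of $\tilde G[S]$, where $\tilde G$ has edge set $\{\{x,y\}:|x-y|_1\le N_t(x)+N_t(y)\}$.

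To show that $\tilde G[S]$ has almost surely no infinite component for $p$ small, I would bound the expected cluster size of the origin by summing over self-avoiding paths $\gamma=(0=x_0,\dots,x_n)$ of combinatorial length $n$ and total $\ell^1$-displacement $L(\gamma)=\sum_{i=0}^{n-1}|x_i-x_{i+1}|_1$. Summing the required edge conditions gives $2\sum_{i=0}^n N_t(x_i)\ge L(\gamma)$, so by independence of the $N_t(x_i)$,
\begin{equation}
\PP[\gamma\subseteq\tilde G[S]]\le p^{n+1}\,\PP\bigl[\mathrm{Poisson}((n+1)t)\ge L(\gamma)/2\bigr].
\end{equation}
I would split the sum over $\gamma$ into a short regime $L\le 2(n+1)t$, where a generating-function bound (based on $\sum_{y\in\Z^d}x^{|y|_1}=\bigl((1+x)/(1-x)\bigr)^d$ evaluated near $x=1-1/t$) shows the number of such paths is $\le(Ct^d)^n$ up to polynomial factors, and a long regime $L>2(n+1)t$, where the super-exponential Poisson tail $(2e(n+1)t/L)^{L/2}$ dominates the polynomial-in-$L$ lattice count. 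Both regimes contribute geometric series in $n$, and for $p<\delta(t)=\Theta(t^{-d})$ the total expected number of paths from $0$ is finite. Thus $\E[|\text{cluster of }0|]<\infty$, so every cluster is a.s.\ finite and $S$ is a.s.\ $t$-sparse.

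The main obstacle is the short-regime count of self-avoiding paths of bounded total displacement: naive per-step bounds produce a super-exponential-in-$n$ count because $\tilde G$ has unbounded maximum degree (rare large-$N_t$ vertices create long edges), but the generating-function estimate above captures that when the total step length is $\lesssim nt$, on average each step contributes only $\lesssim t^d$ choices. This matches $p\cdot t^d$ against $1$ in the subcriticality threshold. The long-range regime, by contrast, is routine once one has the Poisson tail bound $\PP[\mathrm{Poisson}(\mu)\ge k]\le(e\mu/k)^k$.
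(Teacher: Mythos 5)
Your domination step is identical to the paper's: both replace the event that $W_x+x$ and $W_y+y$ meet by the deterministic consequence $\|x-y\|_1\le \rho(x)+\rho(y)$ for a per-vertex range variable $\rho$ with Poisson-type tails (the paper uses the maximal displacement $R_x$, you use the jump count $N_t(x)\ge R_x$), and both then run a first-moment Peierls count over chains from the origin. The difference is only in the bookkeeping of that count. The paper keeps the individual range variables $r_i$ and factorizes vertex by vertex: the number of choices of $x_i$ given $x_{i-1}$ is at most $(2(r_{i-1}+r_i)+1)^d\le(5r_{i-1})^d(5r_i)^d$, so the whole sum collapses to $\bigl(\delta(t)\sum_r(5r)^{2d}\PP[R_{\boo}=r]\bigr)^{k+1}$ and Lemma \ref{lem:NonPer} finishes it. You instead aggregate the edge conditions into $\sum_i N_t(x_i)\ge L(\gamma)/2$ and count paths by total displacement. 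This can be made to work, but your long regime as described has a soft spot: for $L$ in the window $2(n+1)t<L\lesssim C(d,t)\,n$ the stated tail bound $(2e(n+1)t/L)^{L/2}$ exceeds $1$ (it is only nontrivial for $L> 2e(n+1)t$, and you really need $L\gg nt$ for it to bite), while the number of $n$-step paths of total displacement $L$ is of order $(CL/n)^{dn}$ --- ``polynomial in $L$'' but of degree $dn$ --- so beating it uniformly in $n$ forces the long-regime threshold up to $L\asymp C(d,t)\,n$ rather than $2(n+1)t$. This is repairable (retain the factor $e^{-(n+1)t}$ in the Chernoff bound, or move the split to $L=K\max(t,1)\,n$ for a large constant $K(d)$ and absorb the intermediate window into the short-regime generating-function count, which still yields $\delta(t)=\Theta(t^{-d})$), but as written the two regimes do not quite cover each other. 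The paper's factorized bound sidesteps this issue entirely, at the cost of a cruder dependence of $\delta$ on $t$ (any polynomial moment of $R_{\boo}$ suffices there), which is irrelevant for the application.
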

\begin{proof}
Take an independent simple random walk $W_x$ on $[0, t]$ for each $x \in \Z^d$.
It suffices to consider the probability of the event, that $\boo \in S$, and the connected component of $\boo$ in the graph of $\{W_{x}+x\}_{x \in S}$ is finite.
Let $R \in \Z_+$, we consider the probability that $\boo$ is connected to some $x \in S$ having $\|x\|_1 > R$.
We show that this probability decays to zero as $R \rightarrow \infty$.
Actually, it is bounded by
\[
\sum_{\substack{\{x_0=\boo, x_1,\ldots, x_k\}\subset \Z^d, \\ k\geq 1, \|x_k\|_1>R}}
\PP[\boo \in S, \exists t_1, \ldots, t_k \in [0, t], W_{x_{i-1}}(t_i)+x_{i-1}=W_{x_i}(t_i)+x_i,
x_i \in S, \forall 1 \leq i \leq k].
\]
For each $x\in \Z^d$, let $R_x := \max_{t' \in [0, t]} \|W_x(t')\|_1$.
Then the above sum is bounded by
\begin{equation}  \label{eq:prop:NonPer2}
\sum_{k=1}^{\infty} \sum_{\substack{x_0=\boo, x_1,\ldots, x_k, \in \Z^d,\\ r_0, \ldots, r_k \in \Z_{+}, 2(r_0+\ldots +r_k)\geq R,\\ \|x_{i-1} - x_i\|_1 \leq r_{i-1}+r_i}}
\prod_{i=0}^k
\PP[R_{x_i} = r_i, x_i \in S].
\end{equation}
For each $x \in \Z^d$, and any $r \in \Z_+$, we have that $\PP[R_{x} = r, x \in S] = \overline{p}\PP[R_{\boo}=r ]$.
Thus \eqref{eq:prop:NonPer2} is further bounded by
\[
\sum_{k=1}^{\infty} \sum_{\substack{r_0, \ldots, r_k \in \Z_{+},\\ 2(r_0+\ldots +r_k)\geq R}}
\PP[R_{\boo} = r_0]\delta(t)
\prod_{i=1}^k 
\left(\PP[R_{\boo} = r_i]\delta(t)(2(r_{i-1}+r_i)+1)^d\right)
\leq
\sum_{k=1}^{\infty} \Upsilon_{k, R}
\]
where
\[
\Upsilon_{k, R}:=
\sum_{\substack{r_0, \ldots, r_k \in \Z_{+},\\ 2(r_0+\ldots +r_k)\geq R}}
\prod_{i=0}^k
\left((5r_i)^{2d}\PP[R_{\boo} = r_i]\delta(t)\right).
\]
We first consider the case where $R=1$. We have
\[
\Upsilon_{k, 1}=
\prod_{i=0}^k
\sum_{r_i \in \Z_{+}}
(5r_i)^{2d}\PP[R_{\boo} = r_i]\delta(t)
=
\left(\sum_{r\in \Z_{+}}
(5r)^{2d}\PP[R_{\boo} = r]\delta(t)\right)^{k+1}.
\]
By Lemma \ref{lem:NonPer} below, we know that
\begin{equation}  \label{eq:prop:NonPer5}
\sum_{r\in \Z_{+}}(5r)^{2d}\PP[R_{\boo} = r] < \infty .
\end{equation}
By taking $\delta(t)= (2\sum_{r\in \Z_{+}}(5r)^{2d}\PP[R_{\boo} = r])^{-1}$, we have $\Upsilon_{k, 1} = 2^{-k-1}$ and $\sum_{k=1}^{\infty} \Upsilon_{k, 1} < \infty$.

For $R\in \Z_+$, we note that $\Upsilon_{k, R}$ monotonically decays as $R$ increases.
Using that $\Upsilon_{k, R} \le \Upsilon_{k, 1} = 2^{-k-1}$,
to show that $\sum_{k=1}^{\infty} \Upsilon_{k, R}$ decays to zero as $R \rightarrow \infty$, it suffices to show that for each $k \in \Z_+$, $\lim_{R\rightarrow\infty} \Upsilon_{k, R} = 0$.
Indeed, we have that
\[
\begin{split}
\Upsilon_{k, R} & \leq
(k+1)
\sum_{r_0, \ldots, r_k \in \Z_{+}, r_0 \geq R/2(k+1)}
\prod_{i=0}^k
\left((5r_i)^{2d}\PP[R_{\boo} = r_i]\delta(t)\right)
\\
&=
(k+1)2^{-k}
\left(\sum_{r \geq R/2(k+1)}
(5r)^{2d}\PP[R_{\boo} = r]\delta(t)\right).
\end{split}
\]
By \eqref{eq:prop:NonPer5}, we have
\[
\lim_{R\rightarrow \infty}
\sum_{r \geq R/2(k+1)}
(5r)^{2d}\PP[R_{\boo} = r]\delta(t) = 0.
\]
This implies that \eqref{eq:prop:NonPer2} decays to zero as $R \rightarrow \infty$, and our conclusion follows.
\end{proof}
\begin{lemma}   \label{lem:NonPer}
Let $t \in \R_+$, $W$ be a simple random walk on $[0, t]$, and let $R_{\boo}=\max_{t' \in [0, t]} \|W(t')\|_1$.
Then for any $m \in \Z_+$, we have $\sum_{r=1}^{\infty} r^m\PP[R_{\boo} = r] < \infty$.
\end{lemma}
\begin{proof}
If $R_{\boo} = r$, then there must be at least $r$ jumps for $W$;
thus we have
$\PP[R_{\boo} = r] \leq \PP[K \geq r]$, where $K$ is a Poisson random variable with rate $t$.
Thus we get
\[
\sum_{r=1}^{\infty} r^m\PP[R_{\boo} = r]
\leq
\E\left[ \sum_{r=1}^K r^m \right]
\leq \E\left[ K^{m+1} \right] < \infty,
\]
so our conclusion follows.
\end{proof}

Now we give an alternative construction of coalescing simple random walks, for sets that are $t$-sparse.
We resolve each connected component of the graph of $\{W_{x}+x\}_{x \in S}$ individually, as by construction they will not affect each other.  This means that rather than indexing the entire infinite set $S$, it is enough to have an ordering on each of the finite components which can be done simply in a translation invariant way.

We set up notations for this by group construction of coalescing paths.
\begin{defn}\label{defn:sparseContruction}
Take $t \in \R_+$ and a collection of coalescing paths $\sP$.
Take $S \subset \Z^d$ with some total ordering $\prec$, and let $\{P_{x}\}_{x \in S}\in \cW_t^{S}$ be a non-percolate family. 

We define $\hJ[\{P_{x}\}_{x \in S}, \sP], \hJ[\{P_{x}\}_{x \in S}]: S\times [0, t] \rightarrow \Z^d$ as follows.
For any connected component of the graph of $\{P_{x}\}_{x \in S}$, denoted as $x_1 \prec \ldots \prec x_{k} \in S$,
we define
\[
\hJ[\{P_{x}\}_{x \in S}, \sP]( x_i,\cdot):=J[\{P_{x_j}\}_{j=1}^{k}, \sP](i,\cdot),
\]
and
\[
\hJ[\{P_{x}\}_{x \in S}](x_i, \cdot):= J[\{P_{x_j}\}_{j=1}^k](i, \cdot).
\]
\end{defn}
When $S$ is $t$-sparse, and each $P_x=W_x+x$ where $W_x$ are independent simple random walks, these constructions give coalescing simple random walks.

\subsection{Biased coupling of paths and coloring} \label{ssec:biased}
In this subsection we will give an explicit construction of the coupling of two times.

We are given walkers with initial locations $S \subset \Z^d$, and an initial random coloring $C:S\to\{0, 1\}$, such that $C(x)=1$ with probability $p$ independently for each $x\in S$.
Recall that the extra randomness is from the IID process, where for each vertex $x\in\Z^d$ we have a sequence of simple random walks $\{W_{x,m}\}_{m=1}^{\infty}$, and uniform $[0, 1]$ random variables $\{u_{x,m}\}_{m=1}^{\infty}$ and $v_x$.
We will construct coalescing simple random walks from each vertex in $S$, denoted by $\{P_x\}_{x\in S} \in \cW_t^{S}$; and a new coloring $\bC$ of the remaining walkers $\bS:=\{P_x(t):x\in S\}$.
Our construction has the following properties:
\begin{enumerate}
\item The walks $\{P_x\}_{x \in S}$ are distributed as coalescing random walks from $S$.
\item Conditional on $\{P_x\}_{x \in S}$, the coloring $\bC$ of the remaining walkers satisfies that $\bC(y)=1$ with probability $p$ independently for each $y \in \bS$.
\end{enumerate}

In order to resolve the problem of ordering, we split the vertices of $S$ into $M$ random groups, each of which will be almost surely $t$-sparse and construct the ordering in each group sequentially.
So the first step is to take $M:= \left\lceil\max \{\delta(t)^{-1}, t^2 \} \right\rceil$, where $\delta(t)$ is given by Proposition~\ref{prop:NonPer}.
The groups are given by the random variables $\{v_x\}_{x\in S}$ (which we also denote as $\bv$ below), by letting $G_l := \left\{x \in S: \lfloor v_x M \rfloor=l \right\}$ for $0 \leq l < M$. Then each $G_l$ is almost surely $t$-sparse by Proposition \ref{prop:NonPer}.
We construct the paths for walkers in each $G_l$ sequentially.  Finally to apply the construction from Definition~\ref{defn:sparseContruction} we will need a total ordering $\prec$ on $\Z^d$ which we define to be the dictionary order by coordinates which is translation invariant.

We start with the group $G_0$.
Almost surely, $\{W_{x,1}+x\}_{x \in G_0}$ is a non-percolate family for $G_0$, and we assume that this is indeed the case for our choice of $\{ (W_{x,1}, v_x ) \}_{x \in S}$.
Using the ordering $\prec$, we can coalesce these paths, by taking $P_{x}:=\hJ[\{W_{x',1}+x'\}_{x' \in G_0}](x,\cdot)$ for each $x \in G_0$.
Next we color the remaining walkers, represented by $\{P_x(t): x \in G_0\}$.
For each $y \in \{P_x(t): x \in G_0\}$, there are only finitely many $x \in G_0$ with $P_x(t)=y$; we denote $\varphi(y)$ to be the smallest one (in the ordering $\prec$), and we let $\bC(y):=C(\varphi(y))$.

For each $l \geq 1$, denote $\hG_l:=\bigcup_{i=0}^{l-1}G_i$ to be the union of the earlier groups.
Let $\sP_l:=\{P_x\}_{x\in\hG_l}$ be the constructed coalescing paths, and $Y_l:=\{P_x(t): x\in \hG_l \}$ be the remaining walkers of these groups.  Given $\bv$ (the random variables used to define the groups), the paths $\sP_l$ and a coloring $\bC(y)$ for each remaining walker $y \in Y_l$ we define the paths from $G_l$.

Fix $x \in G_l$.  We could just choose a random walk from $x$ and let it coalesce with the already defined paths $\sP_l$.  But this would lead to too large a probability that $C(x)\neq \bC(P_x(t))$. Instead we take a biased sample of the walk whose distribution will depend on $C(x)$.  Specifically, given $\sP_l$ and $\{\bC(y)\}_{y\in Y_l}$, we will construct two measures $\fW_0,\fW_1$, on the space of walks with a coloring, such that $(1-p)\fW_0 + p\fW_1$ is the law of a simple random walk, and its coloring would be the same as the coloring of the existing path in $\sP_l$ that it coalescing into; or it would be $\Bern(p)$ if the simple random walk does not coalesce into any path in $\sP_l$.  Then when we pick the walk for $x$ and the coloring according to $\fW_{C(x)}$, the law after averaging over $C(x)$ gives a simple random walk since $C(x)$ is $\Bern(p)$ independent of $\sP_l$ and $\{\bC(y)\}_{y\in Y_l}$, and the coloring would be as desired.

Recall that for each $x\in G_l$ we have a sequence of random walks $\{W_{x,m}\}_{m=1}^\infty$.
Let $y_{x,m} := J[W_{x,m} + x, \sP_l] (t)$ if the endpoint is in $Y_l$, and $y_{x,m} := \Xi$ otherwise, where $\Xi$ is just a notation.
Let $C'_{x,m}:=\bC(y_{x,m})$ if $y_{x,m}\neq \Xi$, and let $C'_{x,m}$ be random $\Bern(p)$ otherwise (below we will explicitly define $C'_{x,m}$ as a function of the big IID process).
We note that (as will be made precise below), in the later case $C'_{x,m}$ can depend on $C(x)$, while it is $\Bern(p)$ after averaging over $C(x)$ (conditional on $\bv$, $\sP_l$, and $\{\bC(y)\}_{y\in Y_l}$).
We wish to pick random $m^*(x)$, depending on $\bv$, $\sP_l$, $\{\bC(y)\}_{y\in Y_l}$ and $C(x)$, such that $C'_{x,m^*(x)} = C(x)$ with probability as close to 1 as possible, and after averaging over $C(x)$ (while still conditional on $\bv$, $\sP_l$, $\{\bC(y)\}_{y\in Y_l}$) the law of $W_{x,m^*(x)}$ is simply a simple random walk, and $C'_{x,m^*(x)}$ is $\Bern(p)$ or equals $\bC(y_{x,m^*(x)})$, depending on whether $y_{x,m^*(x)}= \Xi$ or not.

Now let's define $\fW_0, \fW_1$ and $m^*(x)$.
For $x\in G_l$, we define
\[
\cK_{x,0} :=
\PP\Big[  y_{x,m}\neq \Xi, \bC(y_{x,m}) = 0 \mid \bv, \sP_l, \{\bC(y)\}_{y\in Y_l}\Big],
\]
and
\[
\cK_{x,1} :=
\PP\Big[  y_{x,m}\neq \Xi, \bC(y_{x,m}) = 1 \mid \bv, \sP_l, \{\bC(y)\}_{y\in Y_l}\Big].
\]
We note that these conditional probabilities do not depend on $m$.
In words, $\cK_{x,0}$ (resp. $\cK_{x,1}$) is the probability that a simple random walk starting from $x$ coalesces into a vertex in $Y_l$ with color $0$ (resp. $1$), given $\bv$, $\sP_l$, and $\{\bC(y)\}_{y\in Y_l}$.
We also denote
\[
\cK_{x,\Xi} :=
\PP\Big[ y_{x,m}= \Xi \mid \bv, \sP_l\Big],
\]
which also does not depend on $m$, and is the probability that a simple random walk starting from $x$ does not hit any existing path, given $\bv$, $\sP_l$.
From these definitions we immediately have that $\cK_{x,0}+\cK_{x,1}+\cK_{x,\Xi}=1$.

We then define
\[
w_{x,m}:=
\begin{cases}
\cK_{x,0} u_{x,m}, & \text{if } y_{x,m}\neq \Xi, \bC(y_{x,m}) = 0, \\
\cK_{x,0} + \cK_{x,\Xi}u_{x,m}, & \text{if } y_{x,m} = \Xi,\\
\cK_{x,0} + \cK_{x,\Xi} + \cK_{x,1} u_{x,m}, & \text{if } y_{x,m}\neq \Xi, \bC(y_{x,m}) = 1 .
\end{cases}
\]
In words, given $\bv$, $\sP_l$, $\{\bC(y)\}_{y\in Y_l}$, and $W_{x,m}$, the distribution of $w_{x,m}$ is uniform in $A_{x,0}$, $A_{x,\Xi}$, or $A_{x,1}$, in these three cases respectively.
Here $A_{x,0}:=[0, \cK_{x,0})$, $A_{x,\Xi}:=[\cK_{x,0}, \cK_{x,0}+\cK_{x,\Xi})$, and $A_{x,1}:=[\cK_{x,0}+\cK_{x,\Xi}, 1]$ (see Figure \ref{fig:int} for an illustration).
Conditional on only $\bv$, $\sP_l$ and $\{\bC(y)\}_{y\in Y_l}$, the lengths of these three intervals are equal to $\cK_{x,0}$, $\cK_{x,\Xi}$, and $\cK_{x,1}$, respectively; so the distribution of $w_{x,m}$ is uniform in $[0,1]$.

We now define $C'_{x,m}$, such that $C'_{x,m}=1$ if $w_{x,m}>(1-p)\cK_{x,\Xi} + \cK_{x,0}$, and $C'_{x,m}=0$ otherwise.
Thus conditional on $\bv$, $\sP_l$, and $\{\bC(y)\}_{y\in Y_l}$, we have $C'_{x,m}=\bC(y_{x,m})$ if $y_{x,m}\neq \Xi$, and $C'_{x,m}$ is $\Bern(p)$ if $y_{x,m}=\Xi$, independently for all such $x\in G_l$ and $m\in\Z_+$.

\begin{figure}
\begin{tikzpicture}[line cap=round,line join=round,>=triangle 45,x=1.835cm,y=1.0cm]
\clip(-3,-0.5) rectangle (6,0.5);
\draw [line width=.8pt] (-10,0.0) -- (10,0.0);
\draw [line width=.8pt, color=red] (-2.5,0.0) -- (0.5,0.0);
\draw [line width=.8pt, color=blue] (1.4,0.0) -- (5.5,0.0);

\draw [fill=black] (0.5,0.0) circle (1.2pt);
\draw [fill=black] (1.4,0.0) circle (1.2pt);
\draw [fill=black] (-2.5,0.0) circle (1.2pt);
\draw [fill=black] (5.5,0.0) circle (1.2pt);

\draw (-2.5,0.0) node[anchor=north]{$0$};
\draw (5.5,0.0) node[anchor=north]{$1$};
\draw (0.95,0.0) node[anchor=south]{$A_{x,\Xi}$};
\draw (-1,0.0) node[anchor=south]{$A_{x,0}$};
\draw (3.45,0.0) node[anchor=south]{$A_{x,1}$};

\end{tikzpicture}
\caption{Divide the interval $[0, 1]$ into segments $A_{x,0}$, $A_{x,\Xi}$, and $A_{x,1}$, with lengths $\cK_{x,0}$, $\cK_{x,\Xi}$, and $\cK_{x,1}$, respectively.}
\label{fig:int}
\end{figure}

We let $\fW_0$ denote the measure of $(W_{x,m}, C'_{x,m})$ conditional on $w_{x,m}\leq 1-p$ and let $\fW_1$ denote the measure of $(W_{x,m}, C'_{x,m})$ conditional on $w_{x,m}\geq 1-p$ (and as always conditional on $\bv$, $\sP_l$, $\{\bC(y)\}_{y\in Y_l}$).
Note that this definition is independent of $m$, and from this definition we have that $(1-p)\fW_0+p\fW_1$ is the law of $(W_{x,m}, C'_{x,m})$ for each $m\in\Z_+$.

We choose $m^*(x)$, to be the first $m$ such that $w_{x,m}$ lies in $[0,1-p]$ if $C(x)=0$ or  the first $m$ such that $w_{x,m}$ lies in $[1-p,1]$ if $C(x)=1$, that is
\[
m^*(x):=
\begin{cases}
\min \{m\in \Z_+: w_{x,m}\leq 1-p \},\;\; \mathrm{if }\;C(x)=0, \\
\min \{m\in \Z_+: w_{x,m}\geq 1-p \},\;\; \mathrm{if }\;C(x)=1.
\end{cases}
\]
Since $C(x)$ is $\Bern(p)$, the distribution of $w_{x,m^*(x)}$ is still uniform in $[0, 1]$, conditional on $\bv$, $\sP_l$, and $\{\bC(y)\}_{y\in Y_l}$ but averaging over $C(x)$. We have the following result of the path $W_{x,m^*(x)}$ and coloring $C'_{x,m^*(x)}$.
\begin{lemma}  \label{lem:CondiRandWalk}
Conditional on $\bv$, $\sP_l$, and $\{\bC(y)\}_{y\in Y_l}$, we have that the pair $(W_{x,m^*(x)}, C'_{x,m^*(x)})$ has the same distribution as $(W_{x,m}, C'_{x,m})$ (for any $m\in\Z_+$), and are independent among all $x \in G_l$.
\end{lemma}
\begin{proof}
For each $x \in G_l$, given $m^*(x)$ and $C(x)$ (in addition to $\bv$, $\sP_l$, and $\{\bC(y)\}_{y\in Y_l}$), the law of $(W_{x,m^*(x)}, C'_{x,m^*(x)})$ is then given by $\fW_{C(x)}$.
As $C(x)=1$ and $C(x)=0$ have probability $p$ and $1-p$ respectively, we have that the law of $(W_{x,m^*(x)}, C'_{x,m^*(x)})$ is $(1-p)\fW_0+p\fW_1$, which is the same as $(W_{x,m}, C'_{x,m})$ for each $m\in\Z_+$. Finally, the independence follows from the conditional (on $\bv$, $\sP_l$, $\{\bC(y)\}_{y\in Y_l}$) independence of $C(x)$ and $W_{x,m}$, $u_{x,m}$, among all $x \in G_l$ and $m\in\Z_+$.
\end{proof}

We then construct coalescing paths on $G_l$, using paths $\{W_{x,m^*(x)}\}_{x \in G_l}$ and the same method as $G_0$.
By Lemma \ref{lem:CondiRandWalk}, $\{W_{x,m^*(x)}+x\}_{x \in G_l}$ is almost surely a non-percolate family for $G_l$.
Using the order $\prec$, we take $P_{x}:=\hJ[\{W_{x',m^*(x')} + x'\}_{x' \in G_l}, \sP_l](x, \cdot)$ for each $x \in G_l$.
For each $y \in Y_{l+1}\backslash Y_l$, there are only finitely many $x \in G_l$ with $P_x(t)=y$;
we denote $\varphi(y)\in S$ to be the smallest (in the ordering $\prec$) such that $P_{\varphi(y)}(t) = y$, and we let $\bC(y):=C'_{\varphi(y),m^*(\varphi(y))}$.

Finally, by sequentially processing each $G_l$, $l=0,1,\ldots, M-1$, we have constructed $P_x$ for each $x \in S$.
We let $\bS:=\{P_x(t):x\in S\}$, then we have also defined $\bC(y)$ for each $y \in \bS$.

Our construction satisfies the two properties stated at the beginning of this subsection.
\begin{lemma}  \label{lem:const-prop}
For each $0\le l <M$, conditional on $\bv$ the walks $\sP_{l+1}=\{P_x\}_{x\in\hG_{l+1}}$ are coalescing random walks; and conditional on $\bv$ and $\sP_{l+1}$, the coloring $\{\bC(y)\}_{y\in Y_{l+1}}$ are IID $\Bern(p)$.
Moreover, the walks $\{P_x\}_{x\in S}$ are coalescing random walks; and conditional on $\{P_x\}_{x\in S}$, the coloring $\{\bC(y)\}_{y\in \bS}$ are IID $\Bern(p)$.
\end{lemma}
\begin{proof}
The first statement follows by the fact that $\hJ$ produces coalescing random walks when the input is independent random walks which is satisfied by Lemma~\ref{lem:CondiRandWalk}.

For the second statement we do induction in $l$.
For group $G_0$, since $\varphi(y)$ for all $y \in Y_1=\{P_x(t):x\in G_0\}$ are mutually distinct in $G_0$ and $\{C(x)\}_{x\in G_0}$ are IID $\Bern(p)$, we have that (conditional on $\bv$ and $\sP_1$) $\{\bC(y)\}_{y\in Y_1}$ are IID $\Bern(p)$.

For $G_l$ where $l>0$, we assume that $\{\bC(y)\}_{y\in Y_l}$ are IID $\Bern(p)$ conditional on $\bv$ and $\sP_l$.
By this induction hypothesis, and noting that (by Lemma \ref{lem:CondiRandWalk}) $\{\bC(y)\}_{y\in Y_l}$ and $\{W_{x,m^*(x)}\}_{x \in G_l}$ are independent conditional on $\bv$, $\sP_l$, we have that $\{\bC(y)\}_{y\in Y_l}$ are IID $\Bern(p)$, conditional on $\bv$, $\sP_l$, $\{W_{x,m^*(x)}\}_{x \in G_l}$.
By Lemma~\ref{lem:CondiRandWalk} again, we have that conditional on $\bv$, $\sP_l$, $\{\bC(y)\}_{y\in Y_l}$ and $\{W_{x,m^*(x)}\}_{x \in G_l}$, the coloring $C'_{x,m^*(x)}$ are IID $\Bern(p)$ for all $x\in G_l$ with $y_{x,m^*(x)}=\Xi$.
Then since that $\varphi(y)$ for all $y \in \bS$ are mutually distinct in the set $\{x\in G_l: y_{x,m^*(x)}=\Xi\}$, we have that $\{\bC(y)\}_{y\in Y_{l+1}\setminus Y_l}$ are IID $\Bern(p)$, conditional on $\bv$, $\sP_l$, $\{W_{x,m^*(x)}\}_{x \in G_l}$, and $\{\bC(y)\}_{y\in Y_l}$.
Thus $\{\bC(y)\}_{y\in Y_{l+1}} = \{\bC(y)\}_{y\in Y_l}\cup \{\bC(y)\}_{y\in Y_{l+1}\setminus Y_l}$ are IID $\Bern(p)$ conditional on $\bv$ and $\sP_{l+1}$, as $\sP_{l+1}$ is determined by $\sP_l$ and $\{W_{x,m^*(x)}\}_{x \in G_l}$.
\end{proof}

We conclude with an analysis of the probability that a walker changes its color; i.e. $C(x)\neq \bC(P_x(t))$, for each $x \in S$.
The bound is given by considering the probabilities of random walks intersecting in different ways, encoded by the following definition.
\begin{defn}   \label{defn:CoalProb}
For $t\in\R_+$, and paths $P_1, P_1', \ldots, P_k, P_k' \in \cW_t$, let $T_i:=\inf\{t'\in [0, t]: P_i(t') = P_i'(t')\}\cup \{\infty\}$ for each $i = 1, \ldots, k$.
Define $\cI\left((P_1, P_1'), \ldots, (P_k, P_k')\right)$ as the characteristic function of the event $0<T_1<\ldots<T_k<\infty$.
\end{defn}
\begin{prop}  \label{prop:ConstrChangeProb}
For any given $S \subset \Z^d$ and each $x \in S$, we have
\begin{equation}  \label{eq:ConstrChangeProb:0}
\begin{split}
\PP[ C(x)\neq \bC(P_x(t)) ] &\leq 2t^{-1} +
\Bigg(\sum_{x_1\in S, x_1 \neq x}\frac{1}{2}\E[\cI((W_1+x_1, W_2+x),(W_1+x_1, W_3+x)) ]
\\
+ \sum_{\substack{x_1,x_2\in S, \\ x\neq x_1, x\neq x_2, x_1\neq x_2}}
&\frac{1}{2}\E[\cI((W_1+x_1, W_2+x_2),(W_1+x_1, W_3+x),(W_1+x_1, W_4+x)) ]
\\
+&\frac{1}{2}\E[\cI((W_1+x_1, W_3+x),(W_1+x_1, W_2+x_2),(W_1+x_1, W_4+x)) ]
\\
+&\frac{1}{2}\E[\cI((W_1+x_1, W_3+x),(W_2+x_2, W_4+x),(W_1+x_1, W_2+x_2)) ]\Bigg)^{1/2},
\end{split}
\end{equation}
where $W_1, W_2, W_3, W_4$ are independent simple random walks in $\cW_t$.
\end{prop}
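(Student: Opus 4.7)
The plan is to bound the mismatch probability by conditioning on which group $G_l$ contains $x$, and for $l \geq 1$ on the paths $\{P_{x'}\}_{x' \in \hG_l}$ produced from earlier groups together with the colouring $\bC|_{Y_l}$.  The case $x \in G_0$ is immediate: there is no biasing, so the conditional mismatch probability is at most $1/2$, and the total contribution is at most $1/(2M) \leq 1/(2t^2)$, which is absorbed into the $2t^{-1}$ headline term.

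For $x \in G_l$ with $l \geq 1$, I would decompose the mismatch event according to the behaviour of $W_{x,m^*(x)}+x$ into three disjoint failure modes.  The first is \emph{direct failure}, in which $y_{x,m^*(x)} \in Y_l$ but $\bC(y_{x,m^*(x)}) \neq C(x)$: the biased walker hits an existing path with the wrong colour.  The second is \emph{indirect failure}, in which $W_{x,m^*(x)}+x$ coalesces with an earlier $G_l$-walker before reaching $\hG_l$, so that $P_x(t)$ ends at a wrong-coloured point.  The third is \emph{$\Xi$-failure}, in which $y_{x,m^*(x)}=\Xi$ and another $G_l$-walker earlier in $\prec$ reaches the same endpoint as $x$'s walker with a different $C$-value.

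For direct failure, set $q_i = \sum_{y \in Y_l,\, \bC(y)=i}\cJ_{x,y}$ and $q_\Xi = \cJ_{x,\Xi}$.  The interval construction gives conditional failure probability exactly $(q_1-p)^+ + (q_0-(1-p))^+$.  Since $\bC|_{Y_l}$ is, conditional on $\{P_{x'}\}_{x' \in \hG_l}$, i.i.d.\ $\Bern(p)$ (the inductive version of the invariant spelled out for $\bS$ immediately before the proposition), I would compute $\E[q_1 \mid \{P_{x'}\}] = p(1-q_\Xi)$ and $\E[(q_1 - p(1-q_\Xi))^2 \mid \{P_{x'}\}] = p(1-p)\sum_y \cJ_{x,y}^2$.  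Cauchy--Schwarz in the form $\E[(q_1-p)^+] \leq \E[(q_1-p)^2]^{1/2}$, together with the identity that $\E\sum_y\cJ_{x,y}^2$ equals the probability that two independent simple random walks $W_2+x,\, W_3+x$ coalesce into the same cluster of $\{P_{x'}\}$, bounds this by the square-root of a sum over $x_1 \in S$ of the probability that both $W_2+x$ and $W_3+x$ meet some common existing walker $W_1+x_1$.  This yields the first intersection sum, and the square-root exponent in the stated bound is exactly this Cauchy--Schwarz exponent.

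For indirect and $\Xi$-failure the required coincidence involves an extra $G_l$-walker starting from some $x_2 \in S$ that interacts with both the existing walker from $x_1$ and the walks from $x$; enumerating the possible orderings of the coalescence events among $W_1+x_1$, $W_2+x_2$, $W_3+x$, $W_4+x$ produces the three triple-indicator terms, with the $\frac{1}{2}$ prefactors absorbing the symmetry between the auxiliary walks.  The main obstacle is the combinatorial bookkeeping to identify each failure mode with a specific indicator pattern, and to confirm that the remaining contributions (the $x \in G_0$ case and lower-order sparsity estimates) combine, using $M \geq t^2$, to the constant $2t^{-1}$.
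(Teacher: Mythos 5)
You have the right local ingredients (the interval construction, the conditional independence of $\bC$ on $Y_l$, Cauchy--Schwarz giving the square root), but the architecture of your decomposition does not match the bound, and two of your steps would fail. First, the $2t^{-1}$ term is not there to absorb the $G_0$ case (which indeed only costs $O(t^{-2})$); in the paper it bounds, \emph{for every} $l$, the probability that $x$ is non-isolated in the same-group graph $\left(G_l, E\left(G_l, \{W_{x',m^*(x')}+x'\}_{x'\in G_l}\right)\right)$, via $\PP[x'\in G_l]=M^{-1}$, the expected range $2t$ of a rate-$2$ walk, and $M\geq t^2$. Your plan handles same-group interactions for $l\geq 1$ (your ``indirect'' and ``$\Xi$'' failures) with the triple-indicator sums, but every one of those terms involves an existing walker $W_1+x_1$ from an earlier group. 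They therefore cannot cover the event that the walkers from $x$ and from another same-group vertex $x_2$ coalesce \emph{with each other} without either touching $\hG_l$; that event still produces a mismatch with probability $2p(1-p)$ and has probability of order $tM^{-1}$, so it genuinely needs the $2t^{-1}$ budget you have already spent on $G_0$.

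Second, the triple-indicator sums actually arise inside your ``direct failure'' mode, not outside it. Writing $\cJ_{x,y}\leq \sum_{x'\in\hG_l,\,P_{x'}(t)=y}\PP[\exists t',\,P_{x'}(t')=W(t')+x]$, the quantity $\E\bigl[\sum_y \cJ_{x,y}^2\bigr]$ expands over \emph{pairs} $x_1,x_2\in S$ with $P_{x_1}(t)=P_{x_2}(t)$: the diagonal $x_1=x_2$ gives the two-indicator sum, and the off-diagonal pairs (two distinct existing walkers whose paths have coalesced, each hit by one of the independent copies $W_3+x$, $W_4+x$) give precisely the three triple-indicator sums after enumerating the order of the meeting times. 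Your phrase ``meet some common existing walker $W_1+x_1$'' keeps only the diagonal and undercounts. A smaller issue: you bound $\E[(q_1-p)^+]$ by $\E[(q_1-p)^2]^{1/2}$ while computing the variance centered at $p(1-q_\Xi)$; the clean route is to combine the two one-sided deficits into the single centered quantity $b_{x,1}(1-p)-b_{x,0}p$, whose conditional mean is zero and whose second moment is $p(1-p)\sum_y\cJ_{x,y}^2\leq\frac14\sum_y\cJ_{x,y}^2$, which is what produces the stated $\frac12$ prefactors.
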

To better illustrate the meaning of the upper bound, in Figure \ref{fig:coa} we draw the patterns of coalescing of simple random walks, 
corresponding to the last three lines of \eqref{eq:ConstrChangeProb:0}.
\begin{figure}
\centering
\begin{minipage}{.29\textwidth}
  \centering
% [inline block 0: 3 envs, 54290 chars -> data_tex | \begin{tikzpicture}[line cap=round,line join=round,>=triangle 45,x=0.8cm,y=1.8cm] \clip(-3.5,0.5) rectangle (3.5,5);...]

\subcaption{$\cI\left(\substack{(W_1+x_1, W_3+x),\\(W_2+x_2, W_4+x),\\(W_1+x_1, W_2+x_2)}\right)$}
\end{minipage}
\caption{Three cases of coalescing of simple random walks.}
\label{fig:coa}
\end{figure}

\begin{proof}
Let $l$ (depending on $\bv$) be the number such that $x \in G_l$.
First observe that when $C(x)\neq \bC(P_x(t))$, one of the following two events happen:
\begin{enumerate}
\item for some $x'\in G_l$, $x'\neq x$, the paths $W_{x,m^*(x)}+x$ and $W_{x',m^*(x')}+x'$ intersect;
\item $l>0$, and $C'_{x,m^*(x)}\neq C(x)$.
\end{enumerate}
This is because, if $l>0$ and $W_{x,m^*(x)}+x$ is disjoint from $W_{x',m^*(x')}+x'$ for any $x'\in G_l$, $x'\neq x$, there must be $\bC(P_x(t)) = C'_{x,m^*(x)}$ from our construction.

We start by considering the probability of the first event.
It can be bounded by
\begin{equation}  \label{eq:ConstrChangeProb:1}
\sum_{x'\in S} \PP[x' \in G_l,\; \exists t'\in[0,t], W_{x',m^*(x')}(t')+x'=W_{x,m^*(x)}(t')+x ].
\end{equation}
By Lemma \ref{lem:CondiRandWalk}, $W_{x',m^*(x')}, W_{x,m^*(x)}$ are independent simple random walks, and are independent of the event $x' \in G_l$.
Let $\overline{W}$ be a rate $2$ simple random walk from time $0$ to $t$. Then $\overline{W}$ has the same law as $W_{x',m^*(x')}- W_{x,m^*(x)}$, and we can bound \eqref{eq:ConstrChangeProb:1} by
\begin{equation}  \label{eq:ConstrChangeProb:2}
M^{-1}\sum_{x' \in \Z^d} \PP[ \exists t'\in[0,t],\overline{W}(t') = x - x' ].
\end{equation}
The summation in \eqref{eq:ConstrChangeProb:2} is precisely the expected number of locations visited by $\overline{W}$ in time $[0, t]$; thus \eqref{eq:ConstrChangeProb:2} is bounded by $2tM^{-1} \leq 2t^{-1}$.

We then consider the probability of the second event.
When $l>0$, denote
\[
b_x := (1-p)\cK_{x,\Xi} + \cK_{x,0}.
\]
From the definition of $m^*(x)$, when $l>0$ we have
\[
\begin{split}
&\PP[C(x)=0, C'_{x,m^*(x)}=1 \mid  \bv, \sP_l, \{\bC(y)\}_{y\in Y_l} ] \\
=&\PP[b_x<w_{x,m^*(x)}<1-p \mid  \bv, \sP_l, \{\bC(y)\}_{y\in Y_l} ] .
\end{split}
\]
Similarly,
\[
\begin{split}
&\PP[C(x)=1, C'_{x,m^*(x)}=0 \mid  \bv, \sP_l, \{\bC(y)\}_{y\in Y_l} ] \\
=&\PP[1-p<w_{x,m^*(x)}<b_x \mid  \bv, \sP_l, \{\bC(y)\}_{y\in Y_l} ] .
\end{split}
\]
Given $\bv, \sP_l, \{\bC(y)\}_{y\in Y_l}$, the distribution of $w_{x,m^*(x)}$ is uniform on $[0,1]$, so
\[
\PP[C(x)\neq C'_{x,m^*(x)} \mid  \bv, \sP_l, \{\bC(y)\}_{y\in Y_l} ] = |b_x-(1-p)|
\]
To get the marginal probability we need to integrate over the conditioned random variables.
We first integrate over the coloring $\bC$. Note that we can write
\begin{equation}  \label{eq:bx}
\begin{split}
(1-p)-b_x
&=
(1-p)(\cK_{x,0}+\cK_{x,\Xi}+\cK_{x,1})
-
\left(
(1-p)\cK_{x,\Xi} + \cK_{x,0}
\right)
\\ &=
(1-p)\cK_{x,1} - p \cK_{x,0}.
\end{split}
\end{equation}
We next write $\cK_{x,1}$ and $\cK_{x,0}$ as the sums of the probabilities that a simple random walk starting from $x$ coalesces into a certain vertex $y\in Y_l$.
More precisely, for any $y \in Y_l$, we denote
\[
\cJ_{x,y} :=
\PP\Big[ J[W + x, \sP_l] (t) = y \mid \bv, \sP_l\Big],
\]
where $W$ is an independent simple random walk.
Then we have
\[
\cK_{x,0} = \sum_{y \in Y_l}\cJ_{x,y}\mathds{1}_{\bC(y)=0}, \quad \cK_{x,1} = \sum_{y \in Y_l}\cJ_{x,y}\mathds{1}_{\bC(y)=1}.
\]
Thus by \eqref{eq:bx}, we have
\[
(1-p)-b_x = (1-p)\cK_{x,1} - p \cK_{x,0} = \sum_{y \in Y_l}\cJ_{x,y}(\mathds{1}_{\bC(y)=1}(1-p)-\mathds{1}_{\bC(y)=0}p). 
\]
This implies that (when $l>0$)
\begin{align*}
&\E[(b_x-(1-p))^2\mid  \bv, \sP_l ]
\\
=&\;
\E\left[\sum_{y,y' \in Y_l}\cJ_{x,y}(\mathds{1}_{\bC(y)=1}(1-p)-\mathds{1}_{\bC(y)=0}p)\cJ_{x,y'}(\mathds{1}_{\bC(y')=1}(1-p)-\mathds{1}_{\bC(y')=0}p)\mid  \bv, \sP_l \right]
\\
=&\;
\sum_{y \in Y_l} \cJ_{x,y}^2((1-p)^2\PP[\bC(y))=1\mid  \bv, \sP_l] + p^2\PP[\bC(y)=0\mid  \bv, \sP_l])
\\
=&\;
p(1-p)\sum_{y \in Y_l} \cJ_{x,y}^2
\leq \frac{1}{4}\sum_{y \in Y_l} \cJ_{x,y}^2,
\end{align*}
where we used that conditioned on $\bv, \sP_l$, the colorings $\bC(y)$ and $\bC(y')$ are independent for any $y\neq y' \in Y_l$, by Lemma \ref{lem:const-prop}.
Thus we have that
\[
\PP[l>0, C(x) \neq C'_{x,m^*(x)}]^2
= 
\E[\mathds{1}_{l>0}|b_x-(1-p)|]^2
\leq 
\E[\mathds{1}_{l>0}(b_x-(1-p))^2]
\leq 
\E\left[\frac{1}{4}\sum_{y \in Y_l} \cJ_{x,y}^2\right].
\]
Next we bound $\E\left[\frac{1}{4}\sum_{y \in Y_l} \cJ_{x,y}^2\right]$.
Conditional on $\bv, \sP_l$ we have
\begin{equation}
\cJ_{x,y} \leq
\sum_{x' \in \hG_l} \mathds{1}_{P_{x'}(t)=y} \PP[\exists t' \in [0,t], P_{x'}(t')=W(t')+x \mid P_{x'} ],
\end{equation}
where $W$ is a simple random walk; and then $\sum_{y \in Y_l} \cJ_{x,y}^2$ can be bounded by
\begin{equation}  \label{eq:ConstrChangeProb:49}
\begin{split}
\sum_{\substack{x_1, x_2 \in S, \\ x_1, x_2 \neq x}}
\mathds{1}_{P_{x_1}(t)=P_{x_2}(t)}
&\PP[\exists t' \in [0,t], P_{x_1}(t')=W(t')+x \mid P_{x_1}]\\ \times &
\PP[\exists t' \in [0,t], P_{x_2}(t')=W(t')+x \mid P_{x_2}].
\end{split}
\end{equation}
For any $x_1, x_2 \in S$, we have that $P_{x_1}, P_{x_2}$ are coalescing simple random walks starting from $x_1, x_2$.
First we consider the case when $x_1=x_2\in S$. 
The summand in \eqref{eq:ConstrChangeProb:49} is the probability that $P_{x_1}$ intersects with $W_2+x$ and $W_3+x$, for two independent simple random walks $W_2, W_3$.
Here $P_{x_1}$ itself is distributed as $W_1+x_1$, for another simple random walk $W_1$.
Thus we have
\[
\begin{split}
&\E[\PP[\exists t' \in [0,t], P_{x_1}(t')=W(t')+x \mid P_{x_1} ]^2 ]
\\
=&
\E[\cI((W_1+x_1, W_2+x),(W_1+x_1, W_3+x)) ] + \E[\cI((W_1+x_1, W_3+x),(W_1+x_1, W_2+x)) ]
\\
=& 2\E[\cI((W_1+x_1, W_2+x),(W_1+x_1, W_3+x)) ],
\end{split}
\]
where the last equality is by symmetry between $W_2, W_3$.
When $x_1\neq x_2$, 
the summand in \eqref{eq:ConstrChangeProb:49} is the probability that each of $W_3+x$ and $W_4+x$ intersects one of $P_{x_1}, P_{x_2}$, for two independent simple random walks $W_3, W_4$.
For $P_{x_1}, P_{x_2}$, they are distributed as $W_1+x_1$, and $J[W_2+x_2, \{W_1+x_1\}]$, for independent simple random walks $W_1, W_2$.

For the summand in \eqref{eq:ConstrChangeProb:49}, we consider three coalescences: between $P_{x_1}$, $P_{x_2}$, between $P_{x_1}$, $W_3+x$, and between $P_{x_2}$, $W_4+x$.
By considering the 6 different orders in which these three coalescences happen, we have
\[
\begin{split} 
&\E\left[\mathds{1}_{P_{x_1}(t)=P_{x_2}(t)} \PP[\exists t' \in [0,t], P_{x_1}(t')=W(t')+x \mid P_{x_1}]\PP[\exists t' \in [0,t], P_{x_2}(t')=W(t')+x \mid P_{x_2}] \right]
\\
=&
\E\left[\cI\left((W_1+x_1, W_2+x_2),(W_1+x_1, W_3+x),(W_1+x_1, W_4+x)\right) \right]
\\
&+\E\left[\cI\left((W_1+x_1, W_2+x_2),(W_1+x_1, W_4+x),(W_1+x_1, W_3+x)\right) \right]
\\
&+\E\left[\cI\left((W_1+x_1, W_3+x),(W_1+x_1, W_2+x_2),(W_1+x_1, W_4+x)\right) \right]
\\
&+\E\left[\cI\left((W_2+x_2, W_4+x),(W_1+x_1, W_2+x_2),(W_1+x_1, W_3+x)\right) \right]
\\
&+\E\left[\cI\left((W_1+x_1, W_3+x),(W_2+x_2, W_4+x),(W_1+x_1, W_2+x_2)\right) \right]
\\
&+\E\left[\cI\left((W_2+x_2, W_4+x),(W_1+x_1, W_3+x),(W_1+x_1, W_2+x_2)\right) \right].
\end{split}
\]
Using the above two equations with \eqref{eq:ConstrChangeProb:49}, we have
\begin{align*}
&\PP[l>0, C(x) \neq C'_{x,m^*(x)}]^2
\\
&\qquad\leq
\sum_{x_1\in S, x_1 \neq x}\frac{1}{2}\E\left[\cI\left((W_1+x_1, W_2+x),(W_1+x_1, W_3+x)\right) \right]
\\
&\qquad
+ \sum_{\substack{x_1,x_2\in S,\\ x\neq x_1, x\neq x_2, x_1\neq x_2}}
\frac{1}{2}\E\left[\cI\left((W_1+x_1, W_2+x_2),(W_1+x_1, W_3+x),(W_1+x_1, W_4+x)\right) \right]
\\
&\qquad
+\frac{1}{2}\E\left[\cI\left((W_1+x_1, W_3+x),(W_1+x_1, W_2+x_2),(W_1+x_1, W_4+x)\right) \right]
\\
&\qquad
+\frac{1}{2}\E\left[\cI\left((W_1+x_1, W_3+x),(W_2+x_2, W_4+x),(W_1+x_1, W_2+x_2)\right) \right],
\end{align*}
where we used symmetry between $x_1$, $x_2$, and symmetry between $W_3$, $W_4$.
The three coalescing events are visualized in Figure \ref{fig:coa}. This establishes \eqref{eq:ConstrChangeProb:0}.
\end{proof}

In order to apply this construction in the next section, we denote it as a measurable and translation invariant function $\cF_t: \cP \times \cQ_t \rightarrow \cR_t$.
Here $\cP$ is the domain of the inputs $S$ and $C$; $\cQ_t$ denotes the domain for the IID randomness; and $\cR_t$ is the domain of the output $\bS, \{P_x\}_{x\in\Z^d}, \bC$.
We can write them as
\[
\cP= \cS \times \cC, \; \cQ_t= ( \cW_t^{\Z_+} \times [0,1]^{\Z_+} \times [0, 1] )^{\Z^d}, \; \cR_t= \cS \times \cW_t^{\Z^d} \times \cC 
\]
where $\cS$ is the collection of all subsets of $\Z^d$, and $\cC:=\{0,1\}^{\Z^d}$ is the space of coloring of $\Z^d$.
The map $\cF_t: \cP \times \cQ_t \to \cR_t$ is defined by our construction
\[
(S, C, \{ (\{W_{x,m}\}_{m=1}^{\infty}, \{u_{x,m}\}_{m=1}^{\infty}, v_x ) \}_{x \in \Z^d} )
\mapsto (\bS, \{P_x\}_{x\in\Z^d}, \bC).
\]
Note that only the coloring of $C$ on $S$ is used, while here we treat the input as a coloring of $\Z^d$. Also, in our construction $\bC$ is defined only on $\bS$, and $P_x$ is only constructed for $x\in S$.
To make this formalism well defined, for each $x \not\in S$, let $P_x\equiv x$; and for each $y \not\in \bS$, let $\bC(y)=0$.
Strictly speaking, this function is actually defined $\cQ_t$-almost surely.

\section{Sequential construction and convergence of measures}   \label{sec:seq-const}
In this section we prove Theorem \ref{thm:main}.
We couple $\cM_{2^k-1}\rho_p$ for all $k=0,1,\ldots$, using the coupling of two times (i.e. the functions $\left\{\cF_{2^{k-1}}\right\}_{k=1}^{\infty}$) given in Section \ref{sec:StepConst}.

The coupling is defined using the following IID process.
For each vertex $x\in \Z^d$ and $k\in\Z_+$, we let $\{W_{x,k,m}\}_{m=1}^{\infty}$ be a sequence of independent simple random walks, each of time  $2^{k-1}$; let $\{u_{x,k,m}\}_{m=1}^{\infty}$ be a sequence of independent uniform $[0, 1]$ random variables, and $v_{x,k}$ be an additional independent uniform $[0, 1]$ random variable.
We also let $C_0:\Z^d\to \{0, 1\}$ be a random coloring such that each $C_0(x)=1$ with probability $p$ independently, and let $S_0=\Z^d$.
For each $k\in \Z_+$, we let
\[
\left(S_k, \{P_{x,k}\}_{x\in\Z^d}, C_k\right) := \cF_{2^{k-1}} \left( S_{k-1}, C_{k-1}, \left\{ \left(\{W_{x,k,m}\}_{m=1}^{\infty}, \{u_{x,k,m}\}_{m=1}^{\infty}, v_{x,k} \right) \right\}_{x \in \Z^d} \right) .
\]
We then define $\hP_{x,k}$ as the concatenation of the collection of paths $\{P_{x,1}\}_{x\in S_0},\ldots, \{P_{x,k}\}_{x\in S_{k-1}}$, and we let $D_k(x):= C_k(\hP_{x,k}(2^k-1))$.
Also denote that $D_0(x):=C_0(x)$.

With this construction and the properties given by Lemma \ref{lem:const-prop}, the paths $\hP_{x,k}$ are distributed as coalescing random walks from time $0$ to $2^{k}-1$; and $S_k$ is the set of walkers of the coalescing random walk at time $2^{k}-1$.
The law of the coloring $D_k(x)$ is given by a product measure with density $p$ over the components of the coalescing paths $\{\hP_{x,k}\}_{x\in\Z^d}$ and so has law $\cM_{2^k-1}\rho_p$.  What remains is to prove that $D_k(x)$ converges almost surely as $k\to \infty$ for each $x\in \Z^d$ which we prove in the following proposition.

\begin{prop}   \label{prop:FiniteChangeTimes}
For each $x \in \Z^d$, almost surely we have
\begin{equation}  \label{eq:prop:FiniteChangeTimes:0}
| \{k \in \Z_{\geq 0}: D_k(x)\neq D_{k+1}(x) \} | < \infty
\end{equation}
\end{prop}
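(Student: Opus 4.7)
The plan is to apply the Borel--Cantelli lemma: almost-sure finiteness of $\{k : D_k(x) \neq D_{k+1}(x)\}$ follows if $\sum_{k \geq 0} \PP[D_k(x) \neq D_{k+1}(x)] < \infty$, and by the translation invariance of the entire coupling we may assume $x = 0$ throughout.

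The preliminary observation needed to apply Proposition~\ref{prop:ConstrChangeProb} at step $k+1$ is that, conditional on $S_k$ together with all the coalescing paths $\{P_{\cdot, j}\}_{j \leq k}$ constructed in the first $k$ steps, the coloring $C_k$ is distributed as $\rho_{p, S_k}$. This holds by induction: $C_k(y) = C_0(\varphi_1 \circ \cdots \circ \varphi_k(y))$, where the composition is an injection from $S_k$ into $\Z^d$ determined by the paths, and $C_0 \sim \rho_p$ is iid. Now further condition on $y_k := \hP_{0, k}(2^k - 1)$, which is itself a function of the paths alone. Since the step-$(k{+}1)$ randomness is independent of this history, Proposition~\ref{prop:ConstrChangeProb} applies conditionally with $S = S_k$, $x = y_k \in S_k$, and $t = 2^k$. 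Averaging over the history and using Jensen's inequality on the square root yields
\begin{equation*}
\PP[D_k(0) \neq D_{k+1}(0)] \leq 2 \cdot 2^{-k} + \sqrt{\E[\Sigma_k]},
\end{equation*}
where $\Sigma_k$ denotes the sum of expected $\cI$-indicators appearing in the proposition, evaluated at $t = 2^k$ and source $y_k$. Enlarging the sums from $S_k$ to $\Z^d$ and invoking translation invariance of simple random walks, $\E[\Sigma_k]$ is bounded by a deterministic quantity $\bar\Sigma_k$ depending only on $t = 2^k$.

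The contribution $\sum_k 2 \cdot 2^{-k}$ is trivially summable, so the main obstacle is to verify $\sum_k \sqrt{\bar\Sigma_k} < \infty$. This reduces to sharp estimates on expected counts of ordered multi-intersections of three or four independent simple random walks in $\Z^d$, $d \geq 3$, over times $t = 2^k$, with the spatial sum cut off at the diffusive scale $|x_1 - y_k| \lesssim \sqrt{t}$. The key inputs should be the transience of simple random walk, the Green's function bound $G(y) \asymp |y|^{2-d}$, and the suppression produced by the common walker $W_1 + x_1$ that is forced to participate in two or three successive \emph{first}-meeting events in a prescribed order. Carefully encoding these ordering constraints, and especially controlling the borderline dimension $d = 3$ where the diffusive cutoff is just barely tame, is where the calculation will be most delicate.
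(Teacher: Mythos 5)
Your skeleton is the same as the paper's (Borel--Cantelli, the observation that conditionally $C_k\sim\rho_{p,S_k}$, and an application of Proposition~\ref{prop:ConstrChangeProb} with $S=S_k$, $x=\hP_{\boo,k}(2^k-1)$, $t=2^k$), but the step where you ``enlarge the sums from $S_k$ to $\Z^d$'' is fatal, and the subsequent program of Green's-function estimates with a diffusive cutoff cannot repair it. The spatial sums of the $\cI$-expectations over \emph{all} of $\Z^d$ genuinely grow with $t$: the paper's Proposition~\ref{prop:ControlCoalProb1} gives $\sum_{x_1}\E[\cI((W_1+x_1,W_2),(W_1+x_1,W_3))]\le 2^{3/2}\kappa_d t^{1/2}$ and the double sums are of order $t^{3/2}$, and these orders are sharp in $d=3$ (summing the first-meeting density over $x_1$ costs $O(1)\,dt_1$, and the second meeting then costs only $t_1^{-(d-2)/2}=t_1^{-1/2}$). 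So your deterministic $\bar\Sigma_k$ is of order $2^{3k/2}$, and $\sum_k\sqrt{\bar\Sigma_k}$ diverges; restricting to $|x_1-y_k|\lesssim\sqrt{t}$ does not help since essentially all of the mass already lives there.

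The missing idea is that the sums must be kept over $S_k$, whose density decays: $\PP[\boo\in S_k]\le\lambda 2^{-k}$, the Bramson--Griffeath $1/t$ decay for coalescing random walks (Lemma~\ref{lem:ProbInSK}). This factor of $2^{-k}$ per surviving point is exactly what turns $2^{k/2}$ into $2^{-k/2}$ and $2^{-2k}\cdot 2^{3k/2}$ into $2^{-k/2}$, so that the square root contributes a summable $2^{-k/4}$. To exploit it one also needs a decorrelation inequality (Lemma~\ref{lem:ProbInSKUncor}) bounding $\PP[x_1+x'\in S_k,\,v_k=x']\le\PP[v_k=x']\,\PP[\boo\in S_k]$ and $\PP[x_1+x',x_2+x'\in S_k,\,v_k=x']\le 2\PP[v_k=x']\,\PP[\boo\in S_k]^2$, because the survival events of distinct points are not independent of each other or of the endpoint $v_k$ of the tagged path. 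Your proposal contains neither the density decay nor this decoupling, and without them the quantity you propose to sum is not finite.
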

We first prove Theorem \ref{thm:main} assuming Proposition \ref{prop:FiniteChangeTimes}.

\begin{proof}[Proof of Theorem \ref{thm:main}]
By Proposition \ref{prop:FiniteChangeTimes}, almost surely, as $k \rightarrow \infty$, $D_k(x)$ converges for each $x$.
Letting the limit be $D(x)$, then almost surely $D_k \rightarrow D$ in $\{0, 1\}^{\Z^d}$ (in the product topology), and the measure of $D$ must be $\mu_p$, the weak limit of $\cM_{2^{k}-1}\rho_p$ as $k \rightarrow \infty$.
Since for each $k \in \Z_+$, $D_k$ is a measurable and translation invariant function of $(C_0, \left\{ \left(\{W_{x,k,m}\}_{m=1}^{\infty}, \{u_{x,k,m}\}_{m=1}^{\infty}, v_{x,k} \right) \right\}_{x \in \Z^d, k\in \Z_{+}})$, so is $D$.
This means that $D$ is a factor of an IID process on $\Z^d$;
thus $(\{0, 1\}^{\Z^d}, \mu_p)$ with translation is isomorphic to a generalized Bernoulli shift by \cite{ornstein1970factors} and \cite{ornstein1987entropy}.

Finally, for $\{0, 1\}^{\Z^d}$ with translations, the topological entropy is $\log 2$; thus by the variational principle for entropy, the measure theoretical entropy of $\mu_p$ is upper bounded by $\log 2$.
This implies that $(\{0, 1\}^{\Z^d}, \mu_p)$ is isomorphic to a Bernoulli shift (with finite state space) by \cite{ornstein1970bernoulli} and \cite{ornstein1987entropy}.
\end{proof}

To prove Proposition \ref{prop:FiniteChangeTimes}, we need to control the probability that $D_k(x)\neq D_{k+1}(x)$, for each $k\in \Z_{\geq 0}$ and $x \in \Z^d$.
For this we need some basic properties of the set $S_k$ (walkers of coalescing random walks at time $2^{k}-1$) and $\hP_{\boo, k}(2^k-1)$ (the location of the walker starting from $\boo$ at time $2^k-1$).
We note that, while the following lemmas are stated using our construction $\hP_{\boo, k}(2^k-1)$ and $S_k$, they are simply results for coalescing simple random walks.
\begin{lemma}  \protect{\cite[Theorem 1]{bramson1980asymptotics}} \label{lem:ProbInSK}
There exists constant $\lambda \in \R_+$, such that $\PP[\boo \in S_k] < 2^{-k}\lambda$.
\end{lemma}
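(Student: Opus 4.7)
The plan is to identify $S_k$ with the set of occupied sites of a system of coalescing simple random walks on $\Z^d$ starting from every vertex, and then invoke the classical density asymptotic of Bramson and Griffeath.

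First I would verify the distributional identification. By the construction in Section~\ref{sec:StepConst}, whenever the input set is $S$ and the input coloring is $\rho_{p,S}$, the paths $\{P_x\}_{x \in S}$ produced by $\cF_t$ are distributed as coalescing simple random walks from $S$ run for time $t$, and the output set $\bS=\{P_x(t):x\in S\}$ records their endpoints (this was one of the two bulleted ``properties'' stated at the end of Section~\ref{sec:StepConst}, together with the fact that the output coloring is again a product measure, which exactly feeds the inductive hypothesis into the next step). Iterating $\cF_{2^{j-1}}$ for $j=1,\ldots,k$ starting from $S_0=\Z^d$ and $C_0\sim \rho_p$, the concatenated paths $\{\hP_{x,k}\}_{x\in\Z^d}$ are distributed as coalescing simple random walks from $\Z^d$ for total time $\sum_{j=1}^k 2^{j-1}=2^k-1$, and $S_k$ is precisely the set of their endpoints, i.e.\ the occupied sites of this coalescing system at time $2^k-1$.

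Next, by translation invariance of $\cF_t$ and of the initial data, the law of $\mathbf{1}_{\boo \in S_k}$ is translation invariant, so $\PP[\boo \in S_k]$ equals the spatial density of $S_k$. Theorem~1 of \cite{bramson1980asymptotics} asserts that for $d\geq 3$ this density, call it $\rho(t)$, satisfies $\rho(t)\sim \gamma_d/t$ as $t\to\infty$ for a constant $\gamma_d>0$ depending only on $d$. In particular there is a finite constant $C$ with $\rho(t)\leq C/t$ for every $t\geq 1$. Taking $t=2^k-1$ gives
\begin{equation}
\PP[\boo \in S_k] \leq \frac{C}{2^k-1} \leq \frac{2C}{2^k}, \qquad k\geq 1,
\end{equation}
and the case $k=0$ is trivial since $\PP[\boo\in S_0]=1$. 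Setting $\lambda:=\max(2C,1)$ yields $\PP[\boo \in S_k]<2^{-k}\lambda$.

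The main obstacle, delegated entirely to \cite{bramson1980asymptotics}, is the $1/t$ decay itself. The essential input is transience of simple random walk in $d\geq 3$: two independent walks from distinct starting points meet with probability strictly less than $1$, so the coalescence rate per surviving walker scales linearly in the density $\rho(t)$ rather than diverging as in the recurrent case $d\leq 2$. Combined with an appropriate second moment / self-consistent comparison, this leads to the differential inequality $\dot\rho \lesssim -\rho^2$, from which the $1/t$ bound follows. Once that black box is granted, the deduction of Lemma~\ref{lem:ProbInSK} from the construction is immediate, as above.
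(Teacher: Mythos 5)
Your proposal is correct and matches the paper's treatment: the paper offers no proof of this lemma beyond the citation to Theorem 1 of Bramson--Griffeath, relying exactly on the identification of $S_k$ with the occupied set of coalescing random walks at time $2^k-1$ and the $d\geq 3$ density asymptotic $\sim C/t$, which you spell out. The only added content in your write-up is the explicit verification of that distributional identification and the heuristic for the $1/t$ decay, both consistent with the paper.
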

The next lemma is about negative correlation of locations occupied by walkers, and we postpone its proof to Appendix \ref{sec:a}.
\begin{lemma}  \label{lem:ProbInSKUncor}
For any mutually different $a_1, a_2, a_3 \in \Z^d$, we have that
\begin{equation}  \label{eq:lem:ProbInSKUncor:1}
\PP[a_1=\hP_{\boo, k}(2^k-1), a_2 \in S_k] \leq \PP[a_1=\hP_{\boo, k}(2^k-1)]\PP[\boo \in S_k],
\end{equation}
and
\begin{equation}  \label{eq:lem:ProbInSKUncor:2}
\PP[a_1=\hP_{\boo, k}(2^k-1), a_2, a_3 \in S_k] \leq 2\PP[a_1=\hP_{\boo, k}(2^k-1)]\PP[\boo \in S_k]^2,
\end{equation}
\end{lemma}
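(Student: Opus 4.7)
The plan is to exploit the fact that marginally $\{\hP_{x,k}\}_{x\in\Z^d}$ is distributed as coalescing random walks (CRW) on $\Z^d$ run for time $t = 2^k-1$, with $S_k$ the set of endpoints $\{Q_x : x\in\Z^d\}$ where $Q_x := \hP_{x,k}(t)$. Both inequalities are in essence negative-correlation statements between the event that the walker from $\boo$ has a specified endpoint and the event that other prescribed points are occupied at time $t$. I will obtain them by exploiting the freedom in the order of joining walks given by Remark \ref{rem:CoalRandWalkNotion}.

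For the first inequality, construct the CRW in two stages: first run CRW on $\Z^d\setminus\{\boo\}$ using independent walks $\{W_x\}_{x\neq\boo}$ to produce coalesced paths $\{R_x\}_{x\neq\boo}$ and endpoint set $\tilde S := \{R_x(t):x\neq\boo\}$; then introduce walker $\boo$ via an independent free walk $W_\boo\sim \fW_t$, which follows $W_\boo+\boo$ until it first meets some $R_x$, where it coalesces. A direct inspection gives $S_k = \tilde S\cup\{Q_\boo\}$, so for the distinct pair $a_1=Q_\boo$ and $a_2$ we have $\{a_2\in S_k\}=\{a_2\in\tilde S\}$. Splitting on the hitting event $H=\{W_\boo+\boo$ meets some $R_x$ in $[0,t]\}$: on $H^c$, the identity $Q_\boo = W_\boo(t)+\boo$ together with the independence of $W_\boo$ from $(\{R_x\},\tilde S)$ and the marginal SRW identity $\PP[W_\boo(t)+\boo=a_1]=\PP[Q_\boo=a_1]$ yields
\begin{equation*}
\PP[Q_\boo=a_1,\,a_2\in\tilde S,\,H^c]\leq \PP[Q_\boo=a_1]\,\PP[a_2\in\tilde S]\leq\PP[Q_\boo=a_1]\,\PP[\boo\in S_k],
\end{equation*}
the last step using $\tilde S\subseteq S_k$ and translation invariance. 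The hitting contribution is controlled using the secondary identity $\PP[Q_\boo=a_1,H]=\PP[W_\boo(t)+\boo=a_1,H]$, obtained by subtracting the $H^c$ equality from the marginal equality of $Q_\boo$ and $W_\boo(t)+\boo$; combining this with a conditional bound on $\PP[H,\,a_2\in\tilde S\mid W_\boo]$ via the density of $\tilde S$ closes the estimate.

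For the second inequality, I iterate the delayed-extraction trick, this time extracting walker $a_2$ from the CRW on $\Z^d\setminus\{\boo\}$ to obtain $\tilde{\tilde S}$ (the endpoints of CRW on $\Z^d\setminus\{\boo,a_2\}$) and a second independent free walk. The same case split on hitting, applied to both $a_2$ and $a_3$, yields a second factor of $\PP[\boo\in S_k]$; the constant $2$ is a combinatorial overhead accounting for the two possible roles of $a_2$ and $a_3$ in the two-step extraction. The technical heart of the argument is the hit-case analysis, where independence between $Q_\boo$ and $\tilde S$ breaks down: bridging this requires the marginal identity above, together with a hitting-probability estimate that replaces the conditional meeting rate of $W_\boo$ with the walks $\{R_x\}$ by the translation-invariant density $\PP[\boo\in S_k]$ using Lemma \ref{lem:ProbInSK}.
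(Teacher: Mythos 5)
There is a genuine gap, and it comes from the order in which you reveal the walks. You construct the coalescing system on $\Z^d\setminus\{\boo\}$ first and add the walker from $\boo$ last, so that $Q_{\boo}:=\hP_{\boo,k}(2^k-1)$ is determined by joining $W_{\boo}+\boo$ into the already-revealed family $\{R_x\}$. On the non-hitting event $H^c$ your factorization is fine, but it already costs you the full budget: $\PP[Q_{\boo}=a_1,\,a_2\in\tilde S,\,H^c]\leq\PP[Q_{\boo}=a_1]\,\PP[a_2\in\tilde S]$, and $\PP[a_2\in\tilde S]$ is essentially $\PP[\boo\in S_k]$ (the slack $\PP[a_2\in S_k\setminus\tilde S]$ is the tiny probability that $a_2$ becomes an endpoint only because of the $\boo$-walker). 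So the hitting contribution $\PP[Q_{\boo}=a_1,\,a_2\in\tilde S,\,H]$ must be absorbed into that slack plus the portion $\PP[W_{\boo}(t)+\boo=a_1,\,a_2\in\tilde S,\,H]$ you discarded, and this is exactly where $Q_{\boo}$ and $\tilde S$ are correlated: on $H$ the event $\{Q_{\boo}=a_1\}$ forces $a_1\in\tilde S$ and biases the whole configuration. The marginal identity $\PP[Q_{\boo}=a_1,H]=\PP[W_{\boo}(t)+\boo=a_1,H]$ does not transfer to the joint event with $\{a_2\in\tilde S\}$, and the "conditional bound via the density of $\tilde S$" is not an argument; no crude estimate can work here because the required bound on the $H$-term is of strictly smaller order than $H$ itself. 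The sketch for \eqref{eq:lem:ProbInSKUncor:2} inherits the same problem and, in addition, does not actually produce the factor $2$.

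The fix is to reverse your revelation order, which is what the paper does: condition on the path $\hP_{\boo,k}$ \emph{first} (with $\hP_{\boo,k}(2^k-1)=a_1$), and then join the walks from $\Z^d\setminus\{\boo\}$ onto it. Remark \ref{rem:CoalRandWalkNotion} gives that, conditionally, these are $J_{\{W_j+x_j\}\rightarrow\{\hP_{\boo,k}\};i}$, and one has the deterministic set inclusion that their endpoints lie in the endpoints of $\{L_{\{W_j+x_j\};i}\}$ (the same walks coalesced only among themselves) together with $\{a_1\}$. Since $a_2\neq a_1$, this yields $\PP[a_2\in S_k\mid\hP_{\boo,k}]\leq\PP[a_2\in S_k]=\PP[\boo\in S_k]$ with no hitting-case analysis at all. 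For \eqref{eq:lem:ProbInSKUncor:2} one iterates: condition additionally on the walks up to the first index whose endpoint is $a_2$, apply the same inclusion to bound the conditional probability that $a_3$ is ever an endpoint, and sum over that index; the factor $2$ then comes from symmetrizing over which of $a_2,a_3$ is reached first.
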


Now we finish the proof of Proposition \ref{prop:FiniteChangeTimes}.
The general idea is to apply Proposition \ref{prop:ConstrChangeProb} to each $\PP[D_k(x) \neq D_{k+1}(x)]$,
then use the above lemmas and Proposition \ref{prop:ControlCoalProb1} to show that the bound decays exponentially in $k$.
\begin{proof}[Proof of Proposition \ref{prop:FiniteChangeTimes}]
Without loss of generality we assume that $x = \boo$.
For each $k \in \Z_{\geq 0}$, denote $\nu_k:= \hP_{\boo, k}(2^k-1)$, and we consider $\PP[D_k(\boo) \neq D_{k+1}(\boo)]$.
By Proposition \ref{prop:ConstrChangeProb}, $\PP[D_k(\boo) \neq D_{k+1}(\boo)|S_{k}, \nu_{k}]$
is bounded by
\[
\begin{split}
2^{1-k} + \Bigg(&\sum_{x_1\in S_k, x_1 \neq \nu_k}
\frac{1}{2}\E[\cI((W_{1}+x_1, W_{2}+\nu_k),(W_{1}+x_1, W_{3}+\nu_k)) ]
\\
+ \sum_{\substack{x_1,x_2\in S_k, \\ \nu_k\neq x_1, \nu_k\neq x_2, x_1\neq x_2}}
&\frac{1}{2}\E[\cI((W_{1}+x_1, W_{2}+x_2),(W_{1}+x_1, W_{3}+\nu_k),(W_{1}+x_1, W_{4}+\nu_k)) ]
\\
+&\frac{1}{2}\E[\cI((W_{1}+x_1, W_{3}+\nu_k),(W_{1}+x_1, W_{2}+x_2),(W_{1}+x_1, W_{4}+\nu_k)) ]
\\
+&\frac{1}{2}\E[\cI((W_{1}+x_1, W_{3}+\nu_k),(W_{2}+x_2, W_{4}+\nu_k),(W_{1}+x_1, W_{2}+x_2)) ]\Bigg)^{1/2},
\end{split}
\]
where $W_1, W_2, W_3, W_4$ are independent simple random walks of time $2^k$.
By translation invariance and Cauchy-Schwarz inequality, we can bound the unconditional probability $\PP[D_k(\boo) \neq D_{k+1}(\boo)]$ by
\begin{equation}   \label{eq:prop:FiniteChangeTimes:2}
\begin{split}
2^{1-k} + \Bigg(&\sum_{x_1,x'\in \Z^d, x_1 \neq \boo}\frac{1}{2}
\PP[x_1+x' \in S_k, \nu_k=x']\E[\cI((W_{1}+x_1, W_{2}),(W_{1}+x_1, W_{3})) ]
\\
+ &\sum_{x_1,x_2,x'\in \Z^d, x_1, x_2\neq \boo, x_1\neq x_2} \PP[x_1+x', x_2+x' \in S_k, \nu_k=x']
\\
\times
\Bigg(&\frac{1}{2}
\E[\cI((W_{1}+x_1, W_{2}+x_2),(W_{1}+x_1, W_{3}),(W_{1}+x_1, W_{4})) ]
\\
+&\frac{1}{2}\E[\cI((W_{1}+x_1, W_{3}),(W_{1}+x_1, W_{2}+x_2),(W_{1}+x_1, W_{4})) ]
\\
+&\frac{1}{2}\E[\cI((W_{1}+x_1, W_{3}),(W_{2}+x_2, W_{4}),(W_{1}+x_1, W_{2}+x_2)) ] \Bigg)\Bigg)^{1/2}.
\end{split}
\end{equation}
By Lemma \ref{lem:ProbInSK} and \ref{lem:ProbInSKUncor}, for any $x_1\neq x_2 \in \Z^d$, with $x_1, x_2\neq \boo$, we have
\begin{equation}   \label{eq:prop:FiniteChangeTimes:31}
\PP[x_1+x' \in S_k, \nu_k=x']
\leq
\PP[\nu_k=x']\PP[\boo \in S_k]
\leq
2^{-k}\lambda\PP[\nu_k=x'],
\end{equation}
and
\begin{equation}   \label{eq:prop:FiniteChangeTimes:32}
\PP[x_1+x', x_2+x' \in S_k, \nu_k=x']
\leq
2\PP[\nu_k=x']\PP[\boo \in S_k]^2
\leq 2^{1-2k}\lambda^2\PP[\nu_k=x'],
\end{equation}
where $\lambda$ is the constant in Lemma \ref{lem:ProbInSK}.
By plugging \eqref{eq:prop:FiniteChangeTimes:31} and \eqref{eq:prop:FiniteChangeTimes:32} into \eqref{eq:prop:FiniteChangeTimes:2}, summing over $x'$,
and using Proposition \ref{prop:ControlCoalProb1}, we can bound \eqref{eq:prop:FiniteChangeTimes:2} by
\[
2^{1-k} + \left(\frac{1}{2}\cdot 2^{-k}\lambda \cdot 2^{3/2}\kappa_d 2^{k/2}
+ \frac{1}{2}\cdot 2^{1-2k}\lambda^2\cdot 3 \cdot \frac{2^{3/2}}{3} \kappa_d 2^{3k/2}\right)^{1/2},
\]
where $\kappa_d$ is a constant depending only on dimension $d$, and is defined in Definition \ref{defn:HitProTim}.
Thus we have that
\[
\sum_{k=0}^{\infty} \PP[D_k(\boo) \neq D_{k+1}(\boo)] \leq \sum_{k=0}^{\infty} 2^{-\frac{k}{4}} \left(
2 + \left(\frac{1}{2}\cdot \lambda \cdot 2^{3/2}\kappa_d
+ \frac{1}{2}\cdot 2\lambda^2\cdot 3 \cdot \frac{2^{3/2}}{3} \kappa_d\right)^{1/2}
\right)
< \infty,
\]
and \eqref{eq:prop:FiniteChangeTimes:0} holds almost surely.
\end{proof}

\bibliographystyle{halpha}
\bibliography{bibliography}

\appendix

\section{Remaining walkers in coalescing simple random walks}  \label{sec:a}
This appendix is devoted to prove Lemma \ref{lem:ProbInSKUncor}. The notations used below are from Section \ref{sec:seq-const}.
\begin{proof} [Proof of Lemma \ref{lem:ProbInSKUncor}]
Let $\{x_i\}_{i=1}^{\infty}$ be an iteration of the set $\Z^d\setminus\{\boo\}$.
We take $\{W_i\}_{i=1}^{\infty}, \{W_i'\}_{i=1}^{\infty}$, where each $W_i, W_i'$ are independent simple random walks of time $2^k-1$.

We prove \eqref{eq:lem:ProbInSKUncor:1} first.
It suffices to show that
$\PP[a_2 \in S_k |\hP_{\boo, k}] \leq \PP[a_2 \in S_k]$,
for any path $\hP_{\boo, k}$ with $\hP_{\boo, k}(2^k-1)=a_1$.
Conditioned on $\hP_{\boo, k}$, from its construction the law of $\{\hP_{x_{i},k}\}_{i=1}^{\infty}$ is the same as that of
$J[\{W_j+x_{j}\}_{j=1}^{\infty}, \{\hP_{\boo, k}\}]$.
Also, conditioned on $\hP_{\boo, k}$, we could couple $\{W_i\}_{i=1}^{\infty}, \{W_i'\}_{i=1}^{\infty}$, such that for any $0\leq t \leq 2^k-1$, 
\[
\{J[\{W_j+x_{j}\}_{j=1}^{\infty}, \{\hP_{\boo, k}\}](i, t)\}_{i=1}^{\infty}
\subset
\{J[\{W'_j+x_{j}\}_{j=1}^{\infty}](i, t)\}_{i=1}^{\infty} \cup \{\hP_{\boo, k}(t)\}.
\]
Thus
\[
\begin{split}
\PP[a_2 \in S_k |\hP_{\boo, k}]
=&
\PP[ a_2 \in  \{J[\{W_j+x_{j}\}_{j=1}^{\infty}, \{\hP_{\boo, k}\}](i, 2^k-1)\}_{i=1}^{\infty} \mid \hP_{\boo, k} ]
\\
\leq &
\PP[ a_2 \in  \{J[\{W'_j+x_{j}\}_{j=1}^{\infty}](i, 2^k-1)\}_{i=1}^{\infty} ]\\
= &
\PP[ a_2 \in  \{\hP_{x_{i},k}(2^k-1)\}_{i=1}^{\infty} ] \\
\leq &\PP[a_2 \in S_k]
\end{split}
\]
where we used that
$J[\{W_j'+x_{j}\}_{j=1}^{\infty}]$ has the same law as $\{\hP_{x_{i},k}\}_{i=1}^{\infty}$.

Then we prove \eqref{eq:lem:ProbInSKUncor:2}, using a similar method.
For $\iota=2,3$, denote $i_\iota$ to be the smallest positive integer such that $\hP_{x_{i_\iota},k}(2^k-1) = a_\iota$, and $i_\iota=\infty$ if no such number exists.
It suffices to prove that, for any $j_2 \in \Z_+$,
\begin{equation}  \label{eq:lem:ProbInSKUncor:6}
\PP[a_1=\hP_{\boo, k}(2^k-1), i_2= j_2 < i_3 < \infty] \leq \PP[a_1=\hP_{\boo, k}(2^k-1), i_2=j_2<i_3]\PP[i_3<\infty].
\end{equation}
Then by summing over $j_2 \in \Z_+$, and using symmetry between $a_2, a_3$, we get that
\[
\begin{split}
&\PP[a_1=\hP_{\boo, k}(2^k-1), a_2, a_3 \in S_k] 
\\=&
\PP[a_1=\hP_{\boo, k}(2^k-1), i_2, i_3 < \infty]
\\
\leq & 2\PP[a_1=\hP_{\boo, k}(2^k-1), i_2 < \infty]\PP[i_3<\infty]\\
\leq &
2\PP[a_1=\hP_{\boo, k}(2^k-1), a_2 \in S_k]\PP[a_3 \in S_k],
\end{split}
\]
Then we get \eqref{eq:lem:ProbInSKUncor:2} by applying \eqref{eq:lem:ProbInSKUncor:1} to the right hand side.

It remains to prove \eqref{eq:lem:ProbInSKUncor:6}.
We note that the event $a_1=\hP_{\boo, k}(2^k-1), i_2=j_2<i_3$ is determined by $\hP_{\boo, k}, \{\hP_{x_i,k}\}_{i=1}^{j_2}$, so \eqref{eq:lem:ProbInSKUncor:6} is implied by
\begin{equation}  \label{eq:lem:ProbInSKUncor:62}
\PP[i_3 < \infty \mid \hP_{\boo, k}, \{\hP_{x_i,k}\}_{i=1}^{j_2}] \leq \PP[i_3<\infty],
\end{equation}
for any $\hP_{\boo, k}, \{\hP_{x_i,k}\}_{i=1}^{j_2}$ such that $a_3 \not\in \{\hP_{x_i,k}(2^k-1)\}_{i=1}^{j_2}$ and $\hP_{\boo, k}(2^k-1)=a_1$.

Conditioned on such $\hP_{\boo, k}, \{\hP_{x_i,k}\}_{i=1}^{j_2}$, from its construction, the law of $\{\hP_{x_{i+j_2},k}\}_{i=1}^{\infty}$ is the same as that of
$J[\{W_j+x_{j+j_2}\}_{j=1}^{\infty}, \{\hP_{\boo, k}\}\cup\{\hP_{x_j,k}\}_{j=1}^{j_2}]$.
We could couple $\{W_i\}_{i=1}^{\infty}, \{W_i'\}_{i=1}^{\infty}$ such that for any $0\leq t \leq 2^k-1$,
\[
\begin{split}
&\{J[\{W_j+x_{j+j_2}\}_{j=1}^{\infty}, \{\hP_{\boo, k}\}\cup\{\hP_{x_j,k}\}_{j=1}^{j_2}](i, t)\}_{i=1}^{\infty}
\\
&\subset 
\{J[\{W_j'+x_{j+j_2}\}_{j=1}^{\infty}](i, t)\}_{i=1}^{\infty} \cup \{\hP_{\boo, k}(t)\}\cup\{\hP_{x_i,k}(t)\}_{i=1}^{j_2},
\end{split}
\]
thus as $a_3 \not\in \{\hP_{\boo, k}(2^k-1)\}\cup\{\hP_{x_i,k}(2^k-1)\}_{i=1}^{j_2}$, we have
\[
\begin{split}
&\PP[i_3 < \infty \mid \hP_{\boo, k},\{\hP_{x_i,k}\}_{i=1}^{j_2}]
\\
=&
\PP[ a_3 \in  \{J[\{W_j+x_{j+j_2}\}_{j=1}^{\infty}, \{\hP_{\boo, k}\}\cup\{\hP_{x_j,k}\}_{j=1}^{j_2}](i, 2^k-1)\}_{i=1}^{\infty} \mid \hP_{\boo, k},\{\hP_{x_i,k}\}_{i=1}^{j_2}]
\\
\leq &
\PP[ a_3 \in \{J[\{W_j'+x_{j+j_2}\}_{j=1}^{\infty}](i, 2^k-1)\}_{i=1}^{\infty}].
\end{split}
\]
Now as $J[\{W_j'+x_{j+j_2}\}_{j=1}^{\infty}]$ has the same law as $\{\hP_{x_{i+j_2}}\}_{i=1}^{\infty}$,
the last line of the above equation is bounded by $\PP[a_3 \in \{\hP_{x_{i+j_2}}(2^k-1)\}_{i=1}^{\infty}] \leq \PP[i_3<\infty]$.
Thus we get \eqref{eq:lem:ProbInSKUncor:62}.
\end{proof}

\section{Estimates on coalescence of simple random walks}
In this appendix we prove some bounds about simple random walks, which are used to bound the terms in the right hand side of \eqref{eq:ConstrChangeProb:0}.
We start with some preliminaries.
\begin{defn}   \label{defn:HitProTim}
Take simple random walk $W: \R_{\geq 0} \rightarrow \Z^d$.
We denote the transition probability as $\cT^{t}_x:=\PP[W(t)=x]$, for any $x\in \Z^d$, $t \in \R_+$.

As a classical result, we can take a constant $\kappa_d(>1)$, depending only on the dimension $d$, such that for any $t\in \R_{+}$, and $x \in \Z^d$, we have $\cT^{t}_x < \kappa_d t^{-d/2}$.
\end{defn}

\begin{defn}   \label{defn:RightContVer}
Let $\cB:=\{x \in \Z^d: \|x\|_1 = 1\}$.
For any $t \in \R_+$ and $P \in \cW_t$, let $P^{\vee}$ be its right continuous limit, i.e.
$P^{\vee}(t') = \lim_{\Delta t \downarrow 0} P(t' + \Delta t)$ for any $t' \in [0, t)$, and $P^{\vee}(t)=P(t)$.
\end{defn}

The following proposition is about estimates on meeting probabilities of two or three independent random walks, and is used in the proof of Proposition \ref{prop:FiniteChangeTimes}.
\begin{prop}  \label{prop:ControlCoalProb1}
For any $t\in \R_+$, and independent simple random walks $W_1, W_2, W_3, W_4$ of time $t$, we have
\begin{equation}   \label{eq:prop:ControlCoalProb2:0}
\sum_{x \in \Z^d}
\E\left[\cI\left((W_{1}+x, W_{2}),(W_{1}+x, W_{3})\right) \right] \leq 2^{3/2} \kappa_d t^{1/2},
\end{equation}
\begin{equation}   \label{eq:prop:ControlCoalProb2:10}
\sum_{x_1, x_2\in \Z^d}
\E\left[\cI\left((W_{1}+x_1, W_{2}+x_2),(W_{1}+x_1, W_{3}),(W_{1}+x_1, W_{4})\right) \right]
\leq \frac{2^{3/2}}{3} \kappa_d t^{3/2},
\end{equation}
\begin{equation}   \label{eq:prop:ControlCoalProb2:20}
\sum_{x_1, x_2\in \Z^d}
\E\left[\cI\left((W_{1}+x_1, W_{3}),(W_{1}+x_1, W_{2}+x_2),(W_{1}+x_1, W_{4})\right) \right]
\leq \frac{2^{3/2}}{3} \kappa_d t^{3/2},
\end{equation}
\begin{equation}   \label{eq:prop:ControlCoalProb2:30}
\sum_{x_1, x_2\in \Z^d}
\E\left[\cI\left((W_{1}+x_1, W_{3}),(W_{2}+x_2, W_{4}),(W_{1}+x_1, W_{2}+x_2)\right) \right]
\leq \frac{2^{3/2}}{3} \kappa_d t^{3/2}.
\end{equation}
\end{prop}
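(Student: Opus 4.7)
The plan is to bound each summed quantity via a Campbell--Mecke-type formula applied to the point processes of ``meeting jumps'', then exploit conditional independence together with a heat-kernel convolution identity to collapse everything to an integral on the ordered simplex with a single heat-kernel factor on the largest time.

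First, write each pair in the form $(P_i,P_i')=(W_{a_i}+b_i,W_{c_i}+d_i)$, so the meeting event reduces to $V_{a_ic_i}(s)=y_i$ with $V_{ac}=W_c-W_a$ (a rate-$2$ SRW started at $\boo$) and $y_i=d_i-b_i$. Let $N_i$ be the counting measure of jumps of $V_{a_ic_i}$ into $y_i$; its intensity (conditional on a $\sigma$-algebra not containing $W_{c_i}$ or $W_{a_i}$) is $\lambda_i(s)=\tfrac{1}{d}\sum_{\eta\in\cB}\PP[V_{a_ic_i}(s)=y_i-\eta\mid\cdots]$, since the rate of jumping from a neighbor of $y_i$ into $y_i$ is $1/d$. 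Because $\cI=1$ implies a tuple of meetings $T_1<\cdots<T_k\le t$,
\[
\cI \le \int_{0<s_1<\cdots<s_k\le t} dN_1(s_1)\cdots dN_k(s_k),
\]
so $\E[\cI]\le\int_{0<s_1<\cdots<s_k\le t}\E\bigl[\prod_i dN_i(s_i)\bigr]$.

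Second, I would identify the right conditioning to make the meeting processes independent. For \eqref{eq:prop:ControlCoalProb2:0}--\eqref{eq:prop:ControlCoalProb2:20}, every pair shares $W_1$ and uses otherwise-disjoint walks, so conditioning on $W_1$ makes the $N_i$ conditionally independent and the joint intensity factorizes as $\prod_i\lambda_i$. For \eqref{eq:prop:ControlCoalProb2:30}, condition on $(W_1,W_2)$: then $N_1$ and $N_2$ are conditionally independent (driven by the distinct free walks $W_3,W_4$), while $N_3$ is the (deterministic) meeting measure of the pair $(W_1+x_1,W_2+x_2)$, again giving a product structure.

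Third, perform the displacement sum using the convolution identity $\sum_z\cT^{s}_z\cT^{s'}_z=\cT^{s+s'}_\boo$ (which follows from symmetry of $\cT$ and the fact that the sum of independent SRWs is SRW). Summing $\prod_i\lambda_i(s_i)$ over $x$ (for \eqref{eq:prop:ControlCoalProb2:0}) or $(x_1,x_2)$ (for the others) and then marginalizing the conditioned walks, the translation offsets inside the heat kernel telescope, and in every case the remaining exponent of $\cT$ equals $2s_k$, the largest time. For \eqref{eq:prop:ControlCoalProb2:30} this uses the identity $(s_1+s_2)+|s_2-s_3|+|s_3-s_1|=2s_3$ on the ordered simplex. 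The $\eta\in\cB$ sums contribute a factor $(2d/d)^k=2^k$, so the integrand becomes $C_k\kappa_d(2s_k)^{-d/2}$ with $C_k=4$ for $k=2$ and $C_k=8$ for $k=3$, after using $\cT^{2s_k}_\cdot\le\kappa_d(2s_k)^{-d/2}$.

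Fourth, integrating over the simplex yields
\[
C_k\kappa_d\int_{0<s_1<\cdots<s_k\le t}(2s_k)^{-d/2}\,ds = \frac{C_k\kappa_d}{(k-1)!}\,2^{-d/2}\int_0^t s^{k-1-d/2}\,ds,
\]
which in the worst case $d=3$ equals $2^{3/2}\kappa_d t^{1/2}$ for $k=2$ and $\frac{2^{3/2}}{3}\kappa_d t^{3/2}$ for $k=3$, exactly matching the claimed constants; for $d\ge4$ the integral is smaller, so the bounds still hold. The main technical obstacle is bookkeeping the shared-walk structure case by case and verifying the cancellation that makes the exponent $2s_k$: for \eqref{eq:prop:ControlCoalProb2:30} in particular, one must be careful that conditioning on $(W_1,W_2)$ (rather than $W_1$ alone) still produces the same clean reduction, which it does thanks to the identity noted above.
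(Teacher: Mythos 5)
Your proposal is correct and follows essentially the same route as the paper: the paper likewise bounds $\E[\cI]$ by the iterated integral of the jump-into-target densities (obtained by sequential conditioning on the walks' positions at the successive hitting times rather than by Palm/Campbell calculus), collapses the spatial sums via Chapman--Kolmogorov to a single factor $\cT^{2t_k}_{\cdot}\le \kappa_d (2t_k)^{-3/2}$, and integrates over the simplex, arriving at exactly your constants $C_2=4$ and $C_3=8$. The one loosely stated point is the factorization of $\E\bigl[\prod_i dN_i(s_i)\bigr]$: conditionally on $W_1$ the mean measure of each $N_i$ acquires atoms at the (now deterministic) jump times of $W_1$, so the stated $\lambda_i$ is not the conditional intensity given $W_1$; this is repaired either by conditioning sequentially at the hitting times (the paper's choice, which is why its density for each meeting splits into the two terms ``$W_1$ jumps'' and ``the other walk jumps'') or by applying the Mecke equation to the Poisson process of $W_1$'s jumps, and it does not change the resulting densities or constants.
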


\begin{proof}[Proof of \eqref{eq:prop:ControlCoalProb2:0}]
For any $x \in \Z^d$, let $T_{1,x}:\inf\{t': W_1(t')+x=W_2(t')\}\cup \{\infty\}$, and $T_{2,x}:\inf\{t': W_1(t')+x=W_3(t')\}\cup \{\infty\}$.
We need to bound
\[
\sum_{x\in \Z^d}
\int_{0<t_1<t_2<t}\PP[T_{1,x}\in dt_1, T_{2,x} \in dt_2].
\]
For $t_1<t_2$, we have
\begin{equation}   \label{eq:prop:ControlCoalProb1:1}
\begin{split}
&\sum_{x\in \Z^d}\PP[T_{1,x}\in dt_1, T_{2,x} \in dt_2]
\\
=&
\sum_{x,y_1, y_3 \in \Z^d}
\PP[T_{2,x} \in dt_2| \Wv_1(t_1)=y_1,\Wv_2(t_1)=y_1+x,W_3(t_1)=y_3]
\\
&\times
\PP[T_{1,x}\in dt_1, \Wv_1(T_{1,x})=y_1, \Wv_2(T_{1,x})=y_1+x, W_3(T_{1,x})=y_3].
\end{split}
\end{equation}
By the definition of $T_{1,x}$, we have
\begin{multline}   \label{eq:prop:ControlCoalProb1:2}
\PP[T_{1,x}\in dt_1, \Wv_1(T_{1,x})=y_1, \Wv_2(T_{1,x})=y_1+x, W_3(T_{1,x})=y_3]/dt_1
\\
\leq
\lim_{\Delta t \downarrow 0}
(\Delta t)^{-1}\sum_{b \in \cB} \left(\PP[W_1(t_1)=y_1+b, W_2(t_1)=y_1+x, W_1(t_1 + \Delta t)=y_1, W_2(t_1 + \Delta t)=y_1+x]  \right.
\\
\left. +
\PP[W_1(t_1)=y_1, W_2(t_1)=y_1+x+b, W_1(t_1 + \Delta t)=y_1, W_2(t_1 + \Delta t)=y_1+x]
\right)
\PP[W_3(t_1)=y_3]
\\
=
(2d)^{-1}\sum_{b\in \cB}(\cT^{t_1}_{y_1}\cT^{t_1}_{y_1+x+b}+\cT^{t_1}_{y_1+b}\cT^{t_1}_{y_1+x})\cT^{t_1}_{y_3},
\end{multline}
and
\begin{equation}
\begin{split}   \label{eq:prop:ControlCoalProb1:3}
&\PP[T_{2,x} \in dt_2| W_1(t_1)=y_1,W_2(t_1)=y_1+x,W_3(t_1)=y_3]/dt_2
\\
=&
\PP[T_{2,x} \in dt_2| W_3(t_1)-W_1(t_1)=y_3-y_1]/dt_2
\\
\leq &
\lim_{\Delta t \downarrow 0}
(\Delta t)^{-1}\sum_{b \in \cB} \PP[W_3(t_2)-W_1(t_2)=x+b, W_3(t_1 + \Delta t) - W_1(t_1 + \Delta t)=x 
\\
& \mid W_3(t_1)-W_1(t_1)=y_3-y_1 ]
\\
=&
2(2d)^{-1}\sum_{b\in \cB}\cT^{2(t_2-t_1)}_{x+b-y_3+y_1}.
\end{split}
\end{equation}
Plugging \eqref{eq:prop:ControlCoalProb1:2} and \eqref{eq:prop:ControlCoalProb1:3} into \eqref{eq:prop:ControlCoalProb1:1}, we have
\begin{equation}
\begin{split}   \label{eq:prop:ControlCoalProb1:4}
&\sum_{x\in \Z^d}\PP[T_{1,x}\in dt_1, T_{2,x} \in dt_2]/dt_1 dt_2
\\
\leq &
2(2d)^{-2} \sum_{x,y_1, y_3 \in \Z^d, b_1, b_2 \in \cB}
(\cT^{t_1}_{y_1}\cT^{t_1}_{y_1+x+b_1}+\cT^{t_1}_{y_1+b_1}\cT^{t_1}_{y_1+x})\cT^{t_1}_{y_3}
\cT^{2(t_2-t_1)}_{x+b_2-y_3+y_1}.
\end{split}
\end{equation}
We have that, for each $b_1, b_2 \in \cB$,
\[
\begin{split}
&\sum_{x, y_1, y_3 \in \Z^d}
(\cT^{t_1}_{y_1}\cT^{t_1}_{y_1+x+b_1}+\cT^{t_1}_{y_1+b_1}\cT^{t_1}_{y_1+x})\cT^{t_1}_{y_3}
\cT^{2(t_2-t_1)}_{x+b_2-y_3+y_1}
\\
=&
\sum_{x,y_1 \in \Z^d}
(\cT^{t_1}_{y_1}\cT^{t_1}_{y_1+x+b_1}+\cT^{t_1}_{y_1+b_1}\cT^{t_1}_{y_1+x})
\cT^{2t_2-t_1}_{x+b_2+y_1}
\\
=&
\sum_{y_1 \in \Z^d}
\cT^{t_1}_{y_1}\cT^{2t_2}_{b_2-b_1} + \cT^{t_1}_{y_1+b_1}\cT^{2t_2}_{b_2}
=
\cT^{2t_2}_{b_2-b_1} + \cT^{2t_2}_{b_2}
\end{split}
\]
thus \eqref{eq:prop:ControlCoalProb1:4} is bounded by
\[
2(2d)^{-2}\sum_{b_1, b_2 \in \cB}
\cT^{2t_2}_{b_2-b_1} + \cT^{2t_2}_{b_2}
\leq 4 (\kappa_d (2t_2)^{-d/2} \wedge 1)
\leq 4 \kappa_d (2t_2)^{-3/2}
\]
where $\kappa_d$ was defined in Definition \ref{defn:HitProTim}.
With this, we have
\[
\begin{split}
\sum_{x\in \Z^d}
\int_{0<t_1<t_2<t}\PP[T_{1,x}\in dt_1, T_{2,x} \in dt_2]
\leq &
4 \kappa_d
\int_{0<t_1<t_2<t} (2t_2)^{-3/2} dt_1 dt_2
\\
= &
2^{1/2} \kappa_d \int_{0<t_2<t} t_2^{-1/2}dt_2
=
2^{3/2} \kappa_d t^{1/2} ,
\end{split}
\]
and our conclusion follows.
\end{proof}

\begin{proof}[Proof of \eqref{eq:prop:ControlCoalProb2:10}]
We let
\[
\begin{split}
T_{1,x_1,x_2}&:=\inf\{t': W_1(t')+x_1=W_2(t')+x_2\}\cup \{\infty\} \\
T_{2,x_1,x_2}&:=\inf\{t': W_1(t')+x_1=W_3(t')\}\cup \{\infty\} \\
T_{3,x_1,x_2}&:=\inf\{t': W_1(t')+x_1=W_4(t')\}\cup \{\infty\}
\end{split}
\]
for any $x_1, x_2 \in \Z^d$.
We need to bound
\begin{equation}   \label{eq:prop:ControlCoalProb2:2}
\begin{split}
&\sum_{x_1,x_2\in \Z^d}
\int_{0<t_1<t_2<t_3<t}\PP[T_{1,x_1,x_2}\in dt_1, T_{2,x_1,x_2} \in dt_2, T_{3,x_1,x_2} \in dt_3]
\\
=&
\sum_{x_1,x_2,y_1,y_3,y_4,z_1,z_4\in \Z^d}
\int_{0<t_1<t_2<t_3<t}
\PP[T_{3,x_1,x_2} \in dt_3| \Wv_1(t_2)=z_1,W_4(t_2)=z_4]
\\
&\times
\PP[T_{2,x_1,x_2}\in dt_2, \Wv_1(T_{2,x_1,x_2})=z_1, \Wv_3(T_{2,x_1,x_2})=z_1+x_1, W_4(t_2)=z_4
\\
& \quad \mid \Wv_1(t_1)=y_1, W_3(t_1)=y_3, W_4(t_1)=y_4]
\\
&\times
\PP[T_{1,x_1,x_2}\in dt_1, \Wv_1(T_{1,x_1,x_2})=y_1,\Wv_2(T_{1,x_1,x_2})=y_1+x_1-x_2, W_3(t_1)=y_3, W_4(t_1)=y_4]
\end{split}
\end{equation}
By the definition of $T_{1,x_1,x_2}, T_{2,x_1,x_2}, T_{3,x_1,x_2}$, we can bound \eqref{eq:prop:ControlCoalProb2:2} by
\begin{equation}   \label{eq:prop:ControlCoalProb2:3}
\begin{split}
&\sum_{x_1,x_2,y_1,y_3,y_4,z_1,z_4\in \Z^d}
\int_{0<t_1<t_2<t_3<t}
\sum_{b_1,b_2,b_3 \in \cB}
2(2d)^{-1}\cT^{2(t_3-t_2)}_{x_1+z_1-z_4+b_3}
\\
\times &
(2d)^{-1}\left( \cT^{t_2-t_1}_{z_1-y_1}\cT^{t_2-t_1}_{z_1+x_1-y_3+b_2} + \cT^{t_2-t_1}_{z_1-y_1+b_2}\cT^{t_2-t_1}_{z_1+x_1-y_3} \right)
\cT^{t_2-t_1}_{z_4-y_4}
\\
\times &
(2d)^{-1}\left( \cT^{t_1}_{y_1}\cT^{t_1}_{y_1+x_1-x_2+b_1} + \cT^{t_1}_{y_1+b_1}\cT^{t_1}_{y_1+x_1-x_2} \right)
\cT^{t_1}_{y_3}\cT^{t_1}_{y_4}
dt_1 dt_2 dt_3 .
\end{split}
\end{equation}
By summing over $y_4, z_4, y_3, x_2, y_1, x_1, z_1$ sequentially, \eqref{eq:prop:ControlCoalProb2:3} becomes
\[
\begin{split}
&\int_{0<t_1<t_2<t_3<t}
2(2d)^{-3}\sum_{b_1, b_2, b_3 \in \cB}
2\cT^{2t_3}_{b_2-b_3} + 2\cT^{2t_3}_{-b_3} dt_1 dt_2 dt_3
\\
\leq &\int_{0<t_1<t_2<t_3<t} 8 (\kappa_d(2t_3)^{-d/2} \wedge 1) dt_1 dt_2 dt_3
\\
\leq &\int_{0<t_1<t_2<t_3<t} 8 \kappa_d (2t_3)^{-3/2} dt_1 dt_2 dt_3
\\
= &
\int_{0<t_3<t}
2^{1/2} \kappa_d t_3^{1/2} dt_3
=
\frac{2}{3}\cdot 2^{1/2} \kappa_d t^{3/2}
\end{split}
\]
and \eqref{eq:prop:ControlCoalProb2:10} follows.
\end{proof}

\begin{proof}[Proof of \eqref{eq:prop:ControlCoalProb2:20}]
For $x_1, x_2 \in \Z^d$ we let
\[
\begin{split}
T_{1,x_1,x_2}&:=\inf\{t': W_1(t')+x_1=W_3(t')\}\cup \{\infty\} \\
T_{2,x_1,x_2}&:=\inf\{t': W_1(t')+x_1=W_2(t')+x_2\}\cup \{\infty\} \\
T_{3,x_1,x_2}&:=\inf\{t': W_1(t')+x_1=W_4(t')\}\cup \{\infty\} .
\end{split}
\]
As in the proof of \eqref{eq:prop:ControlCoalProb2:10}, we just need to bound
\begin{equation}   \label{eq:prop:ControlCoalProb2:22}
\begin{split}
&\sum_{x_1,x_2\in \Z^d}
\int_{0<t_1<t_2<t_3<t}\PP[T_{1,x_1,x_2}\in dt_1, T_{2,x_1,x_2} \in dt_2, T_{3,x_1,x_2} \in dt_3]
\\
=&
\sum_{x_1,x_2,y_1,y_2,y_4,z_1,z_4\in \Z^d}
\int_{0<t_1<t_2<t_3<t}
\PP[T_{3,x_1,x_2} \in dt_3| \Wv_1(t_2)=z_1,W_4(t_2)=z_4]
\\
&\times
\PP[T_{2,x_1,x_2}\in dt_2, \Wv_1(T_{2,x_1,x_2})=z_1, \Wv_2(T_{2,x_1,x_2})=z_1+x_1-x_2, W_4(t_2)=z_4
\\
&\quad \mid \Wv_1(t_1)=y_1,W_2(t_1)=y_2, W_4(t_1)=y_4]
\\
&\times
\PP[T_{1,x_1,x_2}\in dt_1, \Wv_1(T_{1,x_1,x_2})=y_1,\Wv_3(T_{1,x_1,x_2})=y_1+x_1, W_2(t_1)=y_2, W_4(t_1)=y_4]
\\
\leq &
\sum_{x_1,x_2,y_1,y_2,y_4,z_1,z_4\in \Z^d}
\int_{0<t_1<t_2<t_3<t}
\sum_{b_1,b_2,b_3 \in \cB}
2(2d)^{-1}\cT^{2(t_3-t_2)}_{x_1+z_1-z_4+b_3}
\\
&\times
(2d)^{-1}\left( \cT^{t_2-t_1}_{z_1-y_1}\cT^{t_2-t_1}_{z_1+x_1-x_2-y_2+b_2} + \cT^{t_2-t_1}_{z_1-y_1+b_2}\cT^{t_2-t_1}_{z_1+x_1-x_2-y_2} \right)
\cT^{t_2-t_1}_{z_4-y_4}
\\
&\times
(2d)^{-1}\left( \cT^{t_1}_{y_1}\cT^{t_1}_{y_1+x_1+b_1} + \cT^{t_1}_{y_1+b_1}\cT^{t_1}_{y_1+x_1} \right)
\cT^{t_1}_{y_2}\cT^{t_1}_{y_4}
dt_1 dt_2 dt_3 .
\end{split}
\end{equation}
By summing over $y_4, z_4, x_2, y_2, z_1, x_1, y_1$ sequentially, \eqref{eq:prop:ControlCoalProb2:22} becomes
\[
\begin{split}
&\int_{0<t_1<t_2<t_3<t}
2(2d)^{-3}\sum_{b_1, b_2, b_3 \in \cB}
\cT^{2t_3}_{-b_3}+ \cT^{2t_3}_{b_1-b_3}+\cT^{2t_3}_{b_2-b_3}+\cT^{2t_3}_{b_1+b_2-b_3}dt_1 dt_2 dt_3
\\
\leq &\int_{0<t_1<t_2<t_3<t} 8  (\kappa_d(2t_3)^{-d/2} \wedge 1) dt_1 dt_2 dt_3
\leq
\frac{2}{3}\cdot 2^{1/2} \kappa_d t^{3/2}
\end{split}
\]
and \eqref{eq:prop:ControlCoalProb2:20} follows.
\end{proof}

\begin{proof}[Proof of \eqref{eq:prop:ControlCoalProb2:30}]
Again, for $x_1, x_2 \in \Z^d$, let
\[
\begin{split}
T_{1,x_1,x_2}&:=\inf\{t': W_1(t')+x_1=W_3(t')\}\cup \{\infty\} \\
T_{2,x_1,x_2}&:=\inf\{t': W_2(t')+x_2=W_4(t')\}\cup \{\infty\} \\
T_{3,x_1,x_2}&:=\inf\{t': W_1(t')+x_1=W_2(t')+x_2\}\cup \{\infty\}
\end{split}
\]
As in the proof of \eqref{eq:prop:ControlCoalProb2:10}, we just need to bound
\begin{equation}   \label{eq:prop:ControlCoalProb2:32}
\begin{split}
&\sum_{x_1,x_2\in \Z^d}
\int_{0<t_1<t_2<t_3<t}\PP[T_{1,x_1,x_2}\in dt_1, T_{2,x_1,x_2} \in dt_2, T_{3,x_1,x_2} \in dt_3]
\\
=&
\sum_{x_1,x_2,y_1,y_2,y_4,z_1,z_2\in \Z^d}
\int_{0<t_1<t_2<t_3<t}
\PP[T_{3,x_1,x_2} \in dt_3| W_1(t_2)=z_1, \Wv_2(t_2)=z_2]
\\
&\times
\PP[T_{2,x_1,x_2}\in dt_2, W_1(t_2)=z_1, \Wv_2(T_{2,x_1,x_2})=z_2, \Wv_4(T_{2,x_1,x_2})=z_2+x_2
\\
&\quad \mid \Wv_1(t_1)=y_1,W_2(t_1)=y_2, W_4(t_1)=y_4]
\\
&\times
\PP[T_{1,x_1,x_2}\in dt_1, \Wv_1(T_{1,x_1,x_2})=y_1,\Wv_3(T_{1,x_1,x_2})=y_1+x_1, W_2(t_1)=y_2, W_4(t_1)=y_4]
\\
\leq &
\sum_{x_1,x_2,y_1,y_2,y_4,z_1,z_2\in \Z^d}
\int_{0<t_1<t_2<t_3<t}
\sum_{b_1,b_2,b_3 \in \cB}
2(2d)^{-1}\cT^{2(t_3-t_2)}_{x_1-x_2-z_2+z_1+b_3}
\\
&\times
(2d)^{-1}\left( \cT^{t_2-t_1}_{z_2-y_2}\cT^{t_2-t_1}_{z_2+x_2-y_4+b_2} + \cT^{t_2-t_1}_{z_2-y_2+b_2}\cT^{t_2-t_1}_{z_2+x_2-y_4} \right)
\cT^{t_2-t_1}_{z_1-y_1}
\\
&\times
(2d)^{-1}\left( \cT^{t_1}_{y_1}\cT^{t_1}_{y_1+x_1+b_1} + \cT^{t_1}_{y_1+b_1}\cT^{t_1}_{y_1+x_1} \right)
\cT^{t_1}_{y_2}\cT^{t_1}_{y_4}
dt_1 dt_2 dt_3 .
\end{split}
\end{equation}
By summing over $z_1, y_2, y_4, x_1, x_2, z_2, y_1$ sequentially, \eqref{eq:prop:ControlCoalProb2:32} becomes
\[
\begin{split}
&\int_{0<t_1<t_2<t_3<t}
2(2d)^{-3}\sum_{b_1, b_2, b_3 \in \cB}
\cT^{2t_3}_{b_3}+ \cT^{2t_3}_{-b_1+b_3}+\cT^{2t_3}_{b_2+b_3}+\cT^{2t_3}_{-b_1+b_2+b_3}dt_1 dt_2 dt_3
\\
\leq &\int_{0<t_1<t_2<t_3<t} 8  (\kappa_d(2t_3)^{-d/2} \wedge 1) dt_1 dt_2 dt_3
\leq
\frac{2}{3}\cdot 2^{1/2} \kappa_d t^{3/2}
\end{split}
\]
and \eqref{eq:prop:ControlCoalProb2:30} follows.
\end{proof}

\end{document}